\newcommand{\ff}{\mathcal{F}}
\newcommand{\Prb}{\mathsf{P}}
\newtheorem{thm}{Theorem}
\newtheorem{prob}[thm]{Problem}
\newtheorem{cla}[thm]{Claim}
\newtheorem{lemma}[thm]{Lemma}
\newtheorem{cor}[thm]{Corollary}
\newtheorem{prop}[thm]{Proposition}
\title{VC-saturated set systems}
\author{N\'ora Frankl$^{1,2}$ \and Sergei Kiselev$^{1}$ \and Andrei Kupavskii$^{1,3,4}$ \and Bal\'azs Patk\'os$^{1,5}$ \\
\small $^1$ Laboratory of Combinatorial and Geometric Structures,\\ \small Moscow Institute of Physics and Technology\\
\small $^2$ Carnegie Mellon University, Pittsburgh\\
\small $^3$ Institute for Advanced Study, Princeton\\
\small $^4$ G-SCOP, CNRS, Grenoble\\
\small $^5$ Alfr\'ed R\'enyi Institute of Mathematics, Budapest}
\date{}
\begin{document}

\maketitle

\begin{abstract}
    The well-known Sauer lemma states that a family $\ff\subseteq 2^{[n]}$ of VC-dimension at most $d$ has size at most  $\sum_{i=0}^d\binom{n}{i}$. We obtain both random and explicit constructions to prove that the corresponding saturation number, i.e., the size of the smallest maximal family with VC-dimension $d\ge 2$, is at most $4^{d+1}$, and thus is independent of $n$.
\end{abstract}
%\textcolor{red}{Add to references: On the trace of finite sets,  J. Comb. Theory, Ser. A 34(1) (1983) 41-45  and [FP] P.Frankl and J. Pach, On the number of sets in a null t-design, European J. Combin. 4(1) (1983), 21-23}
\section{Introduction}
In this paper, we consider a set theoretic problem concerning the Vapnik-Chervonenkis dimension of set families. This notion plays a central role in statistical learning theory \cite{BluEhrHau89,VapChe68}, discrete and computational geometry \cite{Mat13} and several other areas of mathematics \cite{FurPac91,KreNisRon99}.

For a family $\ff$ and a set $X,$ let $\ff|_X:=\{F\cap X: F\in \ff\}$ be the {\it projection} of $\ff$ onto $X$ (also called the \textit{trace} of $\ff$ on $X$). We say that $\ff$ \textit{shatters} $X$ if $\ff|_X=2^X$. The {\it VC-dimension} of $\ff$, denoted $VC(\ff)$ is the size of the largest $X$ shattered by $\ff$. The following seminal result, often called the Sauer lemma, relates the size of a set family with its VC-dimension. %at most $d$ is the so-called Sauer-lemma.

\begin{thm}[Sauer \cite{sau72}, Shelah \cite{She72}, Vapnik, Chervonenkis \cite{VapChe68}]\label{vc}
If $\ff \subseteq 2^{[n]}$ has VC-dimension at most $d$, then $|\ff|\le \sum_{i=0}^d\binom{n}{i}$.
\end{thm}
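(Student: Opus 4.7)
The plan is to prove a sharper statement, due to Pajor, namely $|\ff| \le |S(\ff)|$ for every $\ff \subseteq 2^{[n]}$, where $S(\ff) := \{X \subseteq [n] : \ff|_X = 2^X\}$ is the collection of sets shattered by $\ff$. This implies Theorem~\ref{vc} at once, because $VC(\ff) \le d$ means every $X \in S(\ff)$ has $|X| \le d$, so $|S(\ff)| \le \sum_{i=0}^d \binom{n}{i}$.

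I would prove $|\ff| \le |S(\ff)|$ by induction on $n$, the base case $n = 0$ being trivial. For the inductive step, I split $\ff$ according to membership of the element $n$: set $\ff_0 := \{F \in \ff : n \notin F\}$ and $\ff_1 := \{F \setminus \{n\} : F \in \ff,\ n \in F\}$, both regarded as families on $[n-1]$. Then $|\ff| = |\ff_0| + |\ff_1|$, and the inductive hypothesis yields $|\ff_0| \le |S(\ff_0)|$ and $|\ff_1| \le |S(\ff_1)|$, where now $S(\ff_0), S(\ff_1) \subseteq 2^{[n-1]}$.

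The main step is to show $|S(\ff_0)| + |S(\ff_1)| \le |S(\ff)|$. Both $S(\ff_0)$ and $S(\ff_1)$ sit inside $S(\ff)$: any set shattered by a subfamily is shattered by $\ff$ itself (for $\ff_1$, note that the candidate shattered set lies in $[n-1]$, so removing $n$ from the witness sets does not affect the trace). The key trick is that any $X \in S(\ff_0) \cap S(\ff_1)$ produces an \emph{extra} shattered set $X \cup \{n\} \in S(\ff)$: for $T \subseteq X \cup \{n\}$ with $n \notin T$ pick $F \in \ff_0$ with $F \cap X = T$; with $n \in T$ pick $F \in \ff$ containing $n$ with $F \cap X = T \setminus \{n\}$. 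Since the families of witnessed shattered sets with and without $n$ are disjoint,
\[
|S(\ff)| \ge |S(\ff_0) \cup S(\ff_1)| + |S(\ff_0) \cap S(\ff_1)| = |S(\ff_0)| + |S(\ff_1)|,
\]
and combining with the inductive bounds closes the induction. The only delicate point in the whole argument will be this double-counting observation, which is precisely what turns the overlap of the two recursively-shattered families into genuinely new shattered sets inside $S(\ff)$; everything else is bookkeeping.
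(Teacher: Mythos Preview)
Your argument is correct: the induction on $n$ with the ``double-counting'' observation that $X\in S(\ff_0)\cap S(\ff_1)$ forces $X\cup\{n\}\in S(\ff)$ is exactly the standard proof of Pajor's inequality $|\ff|\le |Sh(\ff)|$, and the deduction of Theorem~\ref{vc} from it is immediate.

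As for comparison with the paper: there is nothing to compare. The paper does \emph{not} prove Theorem~\ref{vc}; it is quoted as a classical result with references to Sauer, Shelah, and Vapnik--Chervonenkis, and the surrounding text merely surveys alternative proofs (the Alon/Frankl down-shifting technique, the Frankl--Pach proof, and Pajor's strengthening $|\ff|\le |Sh(\ff)|$). Your proposal is in fact a self-contained write-up of the Pajor approach that the paper cites but does not reproduce, so it supplies strictly more than the paper itself does for this statement.
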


The bound of Theorem \ref{vc} is sharp as shown by all subsets of $[n]$ of size at most $d$, but there are many other extremal families achieving this size (see e.g. \cite{FurQui84}). Alon \cite{Alo83} and Frankl \cite{Fra83} reproved Theorem \ref{vc} independently, both introducing a down-shifting technique that is often used to address extremal problems on traces of finite sets. For one more possible proof of Theorem \ref{vc}, see \cite{FraPac83}. Pajor \cite{Paj85} strengthened Theorem \ref{vc} to the inequality $|\ff|\le |Sh(\ff)|$ where $Sh(\ff)$ stands for the family of all sets shattered by $\ff$ (a dual inequality was obtained by Bollob\'as, Leader, and Radcliffe \cite{BolLeaRad89} and then by Bollob\'as and Radcliffe \cite{BolRad95}). Examining families satisfying these inequalities with equality has been studied lately (see e.g. \cite{MesRon13}). For more extremal set theoretic problems on traces of set families, see Chapter 8 of \cite{GerPat18}.

\smallskip

We study the {\it saturation} problem for families with fixed VC-dimension. We say that $\ff\subset 2^{[n]}$ is {\it saturated} if $VC(\ff)<VC(\ff')$ for every $\ff'\subset 2^{[n]}$ such that $\ff'\supsetneq \ff$, and $\ff$ is $d$-saturated if it is saturated and $VC(\ff)=d$. Answering a question of Frankl \cite{Fra89}, after work by Alon, Moran, and Yehudayoff \cite{AloMorYeh16}, Balogh, M\'esz\'aros, and Wagner determined \cite{BalMesWag18} the asymptotics of the logarithm of the number of $d$-saturated families $\ff\subseteq 2^{[n]}$. We will be interested in the \textit{saturation number} $sat_{VC}(n,d)$, the minimum size of a $d$-saturated family $\ff\subseteq 2^{[n]}$. Clearly, $sat_{VC}(n,0)=1$ for any $n$, as any set forms a $0$-saturated family. Dudley showed \cite{Dud85} (see also \cite{Dudl99}) that $sat_{VC}(n,1)=n+1$ for all values of $n$. Together with Theorem \ref{vc}, this implies that any $1$-saturated family $\ff\subseteq 2^{[n]}$ has size $n+1$. Our main result shows that for larger values of $d$, the situation is completely different.

\begin{thm}\label{thmmain}
For any $d\geq 3$, %there exists a constant $C_d$ such that 
$sat_{VC}(n,d-1)\le 4^d$ 
holds for any $n\ge 2d$. Moreover, if $d$ is odd or if $d\ge 14$, then we can replace $4^d$ with $\frac{1}{2}\binom{2d}{d}$.
\end{thm}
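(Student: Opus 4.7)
Our plan is to construct a maximal $(d-1)$-saturated family $\mathcal{F}\subseteq 2^{[n]}$ of the claimed size in two stages: first build a suitable $\mathcal{F}_0\subseteq 2^{[2d]}$, then lift it to $[n]$ by a ``block trick'' that identifies $S:=[n]\setminus[2d-1]$ with a single element.

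For the weaker bound $4^d$, we would use a random construction on $[2d]$: include each $T\subseteq[2d]$ in $\mathcal{F}_0$ independently with a probability $p$ that we will optimise. Two union bounds then give with positive probability that (a) no $d$-subset $X\subseteq[2d]$ is shattered by $\mathcal{F}_0$---the $2^d$ events $\{B\in\mathcal{F}_0|_X\}$ are \emph{independent} across $B\subseteq X$, each of probability $1-(1-p)^{2^d}$, so the shattering probability is $\bigl(1-(1-p)^{2^d}\bigr)^{2^d}$, which is negligible once summed over the $\binom{2d}{d}\le 4^d$ choices of $X$---and (b) for every $A\subseteq[2d]$ with $A\notin\mathcal{F}_0$, some $d$-subset $X$ has $A\cap X\notin\mathcal{F}_0|_X$ (a similar union bound over $(A,X)$ pairs, after conditioning on $A\notin\mathcal{F}_0$; this is exactly maximality of $\mathcal{F}_0$ on $[2d]$). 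Tuning $p$ so that the expected size $p\cdot 2^{2d}$ stays below $4^d$ while both bounds succeed yields the desired $\mathcal{F}_0$.

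For the sharper bound $\tfrac12\binom{2d}{d}$, we would instead exhibit an explicit structured family of ``half-middle-layer'' type---for instance, $(d-1)$-subsets of $[2d]$ obeying a symmetry or parity condition, giving $\binom{2d-1}{d-1}=\tfrac12\binom{2d}{d}$ sets---and check directly that each $d$-subset of $[2d]$ has exactly one missing trace and that these missing traces together cover every $A\subseteq[2d]$. The parity/size restrictions on $d$ in the statement likely arise because the counting identity needed for ``single miss'' to coincide with ``every $A$ is covered'' is cleanest when $d$ is odd, and requires the lower-order terms to be absorbed when $d$ is large enough.

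To lift $\mathcal{F}_0$ to $[n]$, for each $T\in\mathcal{F}_0$ define $F_T:=T\cap[2d-1]$ when $2d\notin T$ and $F_T:=(T\cap[2d-1])\cup S$ when $2d\in T$, and set $\mathcal{F}:=\{F_T:T\in\mathcal{F}_0\}$, so $|\mathcal{F}|=|\mathcal{F}_0|$ is independent of $n$. Then $VC(\mathcal{F})=d-1$: any $X$ with $|X\cap S|\ge 2$ has its trace on $X\cap S$ forced into $\{\emptyset,X\cap S\}$ (too few traces to shatter $X$), while for $|X\cap S|\le 1$ the shattering question reduces via the block identification to whether $\mathcal{F}_0$ shatters a corresponding $d$-subset of $[2d]$. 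For maximality we split into cases on $A\cap S\in\{\emptyset,S,\text{proper}\}$: the first two reduce directly to maximality of $\mathcal{F}_0$ on $[2d]$; in the third we use that at least one of $T_0:=A\cap[2d-1]$ or $T_1:=T_0\cup\{2d\}$ must be absent from $\mathcal{F}_0$, and the shattering $d$-set guaranteed by $\mathcal{F}_0$ translates---replacing $2d$, if present, by a suitable element of $A\cap S$ or $S\setminus A$---into a shattering $d$-set in $[n]$. The main obstacle is enforcing the additional design condition required for this third case, namely that $\mathcal{F}_0$ never contains both $T$ and $T\cup\{2d\}$ for any $T\subseteq[2d-1]$: in the random construction this can be incorporated via a third union bound or a deletion step costing at most a factor of two, and in the explicit construction it should follow from the inherent symmetry of the chosen family, though the interaction between single-miss and the covering condition is precisely what I expect to force the parity/size restrictions on $d$.
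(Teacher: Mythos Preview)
Your lifting step (the ``block trick'') is essentially the paper's duplication argument and is fine. The genuine gap is in your construction of $\mathcal{F}_0$ on $[2d]$.

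First, condition (b) is misstated. Saturation of $\mathcal{F}_0$ means that for every $A\notin\mathcal{F}_0$ there exists a $d$-set $X$ with $\mathcal{F}_0|_X\supseteq 2^X\setminus\{A\cap X\}$, i.e.\ \emph{all} traces except $A\cap X$ are already present. Your condition ``$A\cap X\notin\mathcal{F}_0|_X$'' is (given (a)) necessary but far from sufficient, and it is precisely the hard direction. Second, even with the correct (b), your random model cannot achieve (a) and (b) simultaneously. Writing $q=(1-p)^{2^d}$ for the probability a fixed trace is missing, the union bound for (a) forces $q$ bounded well away from $0$ (indeed $q\gtrsim d\,2^{1-d}$), while (b) requires many $d$-sets to be \emph{almost} shattered, forcing the expected number $q\cdot 2^d$ of missing traces per $X$ to be $O(1)$. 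These are incompatible: with $q$ large enough for (a), a typical $\mathcal{F}_0|_X$ is far from full, so no $X$ witnesses saturation for a generic $A$. A ``deletion step'' does not help either, since deleting sets destroys the almost-shattering needed for (b). The paper sidesteps this tension by randomizing over a much more constrained space: it picks exactly one set from each complementary pair in $\binom{[2d]}{d}$, so $|\mathcal{F}_0|=\tfrac12\binom{2d}{d}$ deterministically and every $d$-set automatically misses \emph{exactly one} trace; the Lov\'asz Local Lemma then shows (for $d\ge 14$) that the missing trace is always $\emptyset$ or the full set, which is exactly the ``almost-shattering'' property that yields saturation directly. For odd $d$ the paper gives explicit such intersecting families via sum-of-elements conditions modulo $2d$, and for the remaining small even $d$ it starts from such a family, completes greedily inside $2^{[2d]}$ (hence size $\le 4^d$), and uses the structure to guarantee the ``no $F$ and $F\triangle\{2d\}$'' condition your lift needs. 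Your sketch of the explicit construction (``$(d-1)$-subsets obeying a parity condition'') is also off: the right object is a maximal intersecting family of $d$-subsets, and the parity restrictions on $d$ come from arithmetic obstructions in the mod-$2d$ analysis, not from the counting identity you mention.
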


%\todo{Add open problem}

One interesting question that remains is to find better lower bounds on $sat_{VC}(n,d-1).$ As we show above, it is at most roughly $4^d$ for most $d$. On the other hand, a trivial lower bound is $sat_{VC}(n,d-1)\ge 2^d-1$ because at least $1$ $d$-set should contain $2^{d}-1$ projections. It is not difficult to get a slightly better bound $sat_{VC}(n,d-1)\ge 2^d$, but a more significant improvement remains elusive. 

\begin{prob} Show that $sat_{VC}(n,d-1)\ge c^d$ for some $c>2.$
\end{prob}

\section{Searching for the upper bound constructions}

We turn to the proof of our main result. Observe that in order to obtain an upper bound on the saturation number, one needs constructions. The following proposition gives us an idea on how constant-sized saturated families should look like. In order to formulate it, we need some definitions. For a family $\ff\subset 2^{[n]}$ and $x,y\in [n]$, we say that $x$ and $y$ are {\it duplicates}, if, for any $F\in \ff,$ $x\in F$ if and only if $y\in F$. Let $D(x)\subset [n]$ be the class of all duplicates of $x$ with $x$ included. %Let %its {\it Venn diagram} $V(\ff)$ is a partition $[n] = S_1\cup \ldots S_{m}$ into $m = 2^{|\ff|}$ possibly empty parts, where each part is of the form $\cap_{F\in \ff} X(F),$ where $X(F)\in \{F,\bar F\}.$ 
Define the {\it reduced family} $\mathcal R(\ff)$ to be the projection of $\ff$ on $W,$ where $W\subset [n]$ is obtained by keeping exactly one element out of each class of duplicates.  %removing all duplicates by taking $1$ element out of each non-empty part of the partition $V(\ff).$ Note that, by definition, $V(\mathcal R(\ff))$ consists of singletons and empty parts only and 
Note that $\mathcal R(\ff)$ is defined up to relabeling of the ground set, $|\mathcal R(\ff)| = |\ff|$ and, informally, $\mathcal R(\ff)$ captures the structure of $\ff.$ In the next proposition $\Delta$ denotes the symmetric difference of sets.%$\ff$ can be obtained out of $\ff'$ by duplicating some of the elements of the ground set and, as such,  %For a family $\ff,$ let $\aaa(\ff)$ be the equivalence class of all families $\G$ that $\mathcal R(\G)$ is isomorphic to $\mathcal R(\ff)$.

\begin{prop}\label{equiv} Let $d\ge 2$ and consider a $d$-saturated family $\ff\subset 2^{[n]}$.

(i) Assume that $\ff = \mathcal R(\ff)$. If $x_1,\ldots, x_m\in [n]$ are such that, for any $F\in \ff$ and $x_i$, the set $F\triangle \{x_i\}$ is not contained in $\ff$, then a family $\ff'$ (on a larger ground set) that is obtained from $\ff$ by duplicating some of $x_1,\ldots, x_m$ is $d$-saturated.

(ii) If there exists $x\in[n]$ such that $|D(x)|\ge 2$, then for any such $x$ % and $x$ is the element included in $W$ from the definition of $\mathcal R(\ff)$,
the family $\mathcal R(\ff)$ must satisfy the property from (i) w.r.t. (a duplicate of) $x$. That is, for any $F\in \ff,$ the set $F\triangle D(x)$ is not contained in $\ff$.
\end{prop}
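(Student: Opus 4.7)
For part (i), I would first verify $VC(\ff')=d$: any set shattered by $\ff'$ meets each duplicate class in at most one element, and replacing its duplicates with the originals gives a shattered set of $\ff$ of the same size. For maximality, let $F'\notin\ff'$ and set $F:=F'\cap[n]$. If $F'$ respects every duplicate class, then $F\notin\ff$ and any $(d+1)$-set shattered by $\ff\cup\{F\}$ (which exists by saturation of $\ff$) is also shattered by $\ff'\cup\{F'\}$, since the relevant projections are onto $X\subseteq[n]$. Otherwise some duplicate pair $(x_i,y)$ disagrees in $F'$; WLOG $x_i\in F'$ and $y\notin F'$. Again the subcase $F\notin\ff$ is handled as before. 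The central subcase is $F\in\ff$: the hypothesis gives $G:=F\triangle\{x_i\}\notin\ff$, so saturation of $\ff$ yields a $(d+1)$-set $X$ shattered by $\ff\cup\{G\}$, and necessarily $x_i\in X$ (else $F|_X=G|_X$ forces $\ff$ to shatter $X$). I would then argue that $X'':=(X\setminus\{x_i\})\cup\{y\}$ is shattered by $\ff'\cup\{F'\}$ via a case split: for $A''\subseteq X''$ with $y\in A''$ the lifted pattern $A:=(A''\setminus\{y\})\cup\{x_i\}$ contains $x_i$ and so must come from some $H\in\ff$ with $x_i\in H$ (since $x_i\notin G$), whose duplicate-augmented copy in $\ff'$ realizes $A''$; for $A''$ with $y\notin A''$, the pattern is either realized by some $H\in\ff$ with $x_i\notin H$ (whose copy in $\ff'$ works) or by $G$, in which case $F'$ itself realizes it via $F'\cap(X\setminus\{x_i\})=G\cap X$ together with $y\notin F'$.

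For part (ii), I would argue by contradiction: suppose some $F\in\ff$ has $F_0:=F\triangle D(x)\in\ff$, and set $\tilde F:=F\triangle\{x\}$. Since elements of $D(x)$ are jointly in or out of every member of $\ff$, we have $\tilde F\notin\ff$, so saturation yields a $(d+1)$-set $X$ shattered by $\ff\cup\{\tilde F\}$. Two consequences are needed: first, $|X\cap D(x)|\le 1$, because two distinct duplicates $a,b$ in $X$ would leave $\ff\cup\{\tilde F\}$ producing at most three of the four patterns on $\{a,b\}$ (only $\emptyset$ and $\{a,b\}$ from $\ff$, plus at most one further from $\tilde F$); second, $x\in X$, because otherwise $\tilde F|_X=F|_X$ makes $\ff$ shatter $X$. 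Hence $D(x)\cap X=\{x\}$, and so $\tilde F\cap X=(F\cap X)\triangle\{x\}=(F\triangle D(x))\cap X=F_0\cap X$. Replacing $\tilde F$ with $F_0\in\ff$ in the shattering forces $\ff$ itself to shatter $X$, contradicting $VC(\ff)=d$.

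The main obstacle is the matching at the end of part (i): producing a single $(d+1)$-set on the enlarged ground set that is simultaneously shattered by $\ff'\cup\{F'\}$. The correct recipe turns out to be the swap $x_i\leftrightarrow y$ inside the shattered set $X$ supplied by saturation of $\ff$, and the bookkeeping then reduces to the two identities $F'\cap(X\setminus\{x_i\})=G\cap X$ and $(H\cup\{y\})\cap X''=(H\cap(X\setminus\{x_i\}))\cup\{y\}$ for $H\in\ff$ with $x_i\in H$ — these convert $x_i$-patterns realized in $\ff\cup\{G\}$ into $y$-patterns realized in $\ff'\cup\{F'\}$ without losing anything.
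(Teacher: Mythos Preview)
Your argument is correct in both parts.

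For (ii) your proof and the paper's are essentially the same idea: both adjoin a set that splits the duplicate class $D(x)$ (you use $\tilde F=F\triangle\{x\}$, the paper uses $Y=(F\setminus D(x))\cup\{x\}$) and then argue, via the two observations that any shattered $(d+1)$-set $S$ satisfies $|S\cap D(x)|\le1$ and that on such $S$ the new set restricts to the trace of either $F$ or $F\triangle D(x)$, that no new $(d+1)$-set is shattered, contradicting saturation.

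For (i) the paper takes a somewhat different, more compact route. Rather than your direct construction of a shattered $(d+1)$-set via a case split on whether $F'$ respects the duplicate classes (and then the swap $x_i\leftrightarrow y$ inside $X$), the paper argues by contradiction: if some $X\notin\ff'$ could be added without raising the VC-dimension, then since $\ff'|_{[n]}=\ff$ is saturated one must have $X|_{[n]}=F|_{[n]}$ for some $F\in\ff'$, so $\emptyset\ne X\triangle F\subset D(x_1)\setminus\{x_1\}$. Picking any $y\in X\triangle F$ and projecting the whole family onto $Y:=([n]\setminus\{x_1\})\cup\{y\}$ gives $\ff_0:=\ff'|_Y\cong\ff$, while $X|_Y$ and $F|_Y$ differ in the single coordinate $y$; the Hamming-distance hypothesis on $x_1$ then forces $X|_Y\notin\ff_0$, and saturation of $\ff_0$ yields $VC(\ff_0\cup\{X|_Y\})>d$, hence $VC(\ff'\cup\{X\})>d$. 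This projection trick swallows your case analysis in one move and avoids the pattern-by-pattern bookkeeping; your version, on the other hand, is constructive and makes explicit \emph{which} $(d+1)$-set gets shattered.
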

We note that the condition of (ii) definitely holds for some $x$ if $n>2^{|\ff|}$. This proposition implies that a constant-sized $d$-saturated family for any sufficiently large $n$ is reducible to a saturated family as in (i).

\begin{proof} (i) For simplicity, assume that $\ff'$ is obtained from $\ff$ by duplicating $x_1$ several times, and let $D(x_1)$ be the class of duplicates of $x_1$. Assume that $\ff'$ is not saturated, that is, there is a set $X\notin \ff'$ such that $\ff'\cup \{X\}$ has $VC$-dimension $d.$ Recall that $\ff'|_{[n]} = \ff,$ and $\ff$ is saturated. Thus, $X|_{[n]} = F|_{[n]}$ for some $F\in \ff'$. In other words, $\emptyset \ne F\Delta X \subset D(x_1)\setminus \{x_1\}$. Take any $y\in  F\Delta X $, define $Y:=[n]\setminus \{x_1\}\cup \{y\}$ and consider $\ff_0:=\ff'|_{Y}.$ Then $\ff_0$ is isomorphic to $\ff.$ By the choice of $y$, $F|_Y\Delta X|_Y = \{y\}$, and thus, by the definition of $x_1$ (and $y$ being the duplicate of $x_1$ for $\ff'$), only at most one of $F|_Y$ and $X|_Y$ can be contained in $\ff_0$. Therefore, $X|_Y\notin \ff_0$, and thus $VC\big(\ff_0\cup \{X|_Y\}\big)>VC(\ff_0),$ a contradiction.

(ii) The proof of this part is largely the proof of (i) in reverse. Assume that this is not the case. Take $F,F\Delta D(x) \in \ff$, put $Y:=(F\setminus D(x))\cup \{x\}$  and consider the family  $\ff_1:=\ff\cup\{Y\}.$ Clearly, $|\ff_1|=|\ff|+1$. Next, we show that $VC(\ff_1) =VC(\ff)=d,$ contradicting the saturation property of $\ff.$ Indeed, assume that some $(d+1)$-element set $S$ is shattered by $\ff_1.$ Then, clearly, $S\cap D(x)\ne \emptyset$. Moreover, if $|S\cap D(x)|=1$ then $Y|_{S}\in \big\{F|_{S}, F\Delta D(x)|_{S}\big\},$ and thus such $S$ should have been shattered by $\ff.$ Therefore, $|S\cap D(x)|\ge 2.$ However, by definition, there is at most $1$ set in $\ff_1$ that does not either contain or is disjoint with $S\cap D(x),$ while, in order to shatter $S,$ one needs at least $2^d$ such sets. This contradiction shows that $VC(\ff_1)=d$, and thus $\ff$ was not saturated in the first place. 
\end{proof}

One of the challenges in proving Theorem~\ref{thmmain} was to find the right class of families to search for constructions in. Proposition~\ref{equiv} suggests to search for (reduced) saturated families such that the Hamming distance between any two sets in the family is at least $2$. One natural way to achieve this is to consider {\it uniform families}, i.e., families in which all sets have the same size. Let us denote the set of all $k$-element subsets of $[n]$ by ${[n]\choose k}$.

It turned out that we can find $(d-1)$-saturated families among intersecting families in ${[2d]\choose d}$. The following proposition gives us a sufficient condition for such a family to be $(d-1)$-saturated. We say that $\ff$ \textit{almost shatters} $X$ if $\ff|_X=2^X\setminus \{\emptyset\}$ or $\ff|_X=2^X\setminus \{X\}$.

\begin{prop}\label{almost}
    If a family $\ff\subset {[2d]\choose d}$ almost shatters any  $A\in {[2d]\choose d}$, then $\mathcal \ff$ is $(d-1)$-saturated.
\end{prop}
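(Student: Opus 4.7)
My plan is to verify both components of $(d-1)$-saturation — that $VC(\ff)=d-1$ and that adjoining any $X\notin\ff$ to $\ff$ strictly increases the VC-dimension — by first extracting a clean structural consequence of the almost-shattering hypothesis, and then treating the saturation claim by a case split on $|X|$.

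For the VC-dimension, I would argue $VC(\ff)\le d-1$ by observing that no $d$-subset of $[2d]$ can be shattered (being only almost-shattered), and a shattered set of size exceeding $d$ would force one of its $d$-subsets to be shattered. The matching lower bound comes as follows: fix $A\in\binom{[2d]}{d}$ and a $(d-1)$-subset $B\subset A$ with $\{a\}=A\setminus B$; for each $S\subseteq B$, the two distinct candidate preimages $S$ and $S\cup\{a\}$ in $2^A$ cannot both equal the single missing trace $T_A\in\{\emptyset,A\}$, so $S\in\ff|_B$, and $B$ is shattered. Next, I would record the key structural fact: on a $2d$-element ground set, only $A$ has trace $A$ on $A$, and only $[2d]\setminus A$ has trace $\emptyset$ on $A$, so $\ff|_A=2^A\setminus\{\emptyset\}$ iff $A\in\ff$, and $\ff|_A=2^A\setminus\{A\}$ iff $[2d]\setminus A\in\ff$. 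Since $VC(\ff)\le d-1$ prevents both at once, $\ff$ contains exactly one element of each complementary pair $\{A,[2d]\setminus A\}$; in particular $\ff$ is intersecting.

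For saturation I split on $|X|$. If $|X|=d$, then $X\notin\ff$ forces $[2d]\setminus X\in\ff$, whence $\ff|_X=2^X\setminus\{X\}$, and adding $X$ supplies the missing trace and shatters $X$. If $|X|<d$, the subclaim is that some $A\in\ff$ is disjoint from $X$; granting this, $A\in\ff$ yields $\ff|_A=2^A\setminus\{\emptyset\}$, and $X\cap A=\emptyset$ means adding $X$ provides the missing empty trace, shattering $A$. To prove the subclaim, I would argue by contradiction: if every $F\in\ff$ meets $X$, then picking any $d$-set $A\supseteq X$ forces every trace $F\cap A$ to meet $X$, so $\ff|_A$ omits every subset of $A\setminus X$, a family of size $2^{d-|X|}\ge 2$, contradicting $|\ff|_A|=2^d-1$. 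Finally, $|X|>d$ reduces to $|X|<d$ via the complementary family $\ff^*=\{[2d]\setminus F:F\in\ff\}$, which also almost shatters every $d$-set (the missing trace merely swaps between $\emptyset$ and $A$), combined with the identity $([2d]\setminus X)\cap A=A\setminus(X\cap A)$ showing that $\ff\cup\{X\}$ shatters $A$ iff $\ff^*\cup\{[2d]\setminus X\}$ does.

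The only step that I expect to require real care is the disjointness subclaim in the $|X|<d$ case: this is the only place where the quantitative strength of almost-shattering (trace size exactly $2^d-1$, not merely avoiding full shattering) enters in an essential way, since the bound $|\ff|_A|\le 2^d-2$ just barely contradicts it. Everything else is bookkeeping built on top of the complementary-pair structure.
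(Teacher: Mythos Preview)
Your proof is correct and essentially matches the paper's approach; you fill in details the paper leaves implicit (the $VC(\ff)=d-1$ check and the complementary-pair structure). The only minor difference is in how the disjoint $A\in\ff$ is found when the added set $B$ has $|B|<d$: the paper gives a direct construction---pick $Y\subset\overline{B}$ with $|Y|=d-|B|$ and use $Y\in\ff|_{B\cup Y}$ to exhibit an $A\in\ff$ with $A\cap(B\cup Y)=Y$, hence $A\cap B=\emptyset$---in place of your counting contradiction on $\ff|_{A'}$ for a $d$-set $A'\supseteq B$.
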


\begin{proof}
    Since $\ff$ is almost shattered, adding a $d$-set to $\ff$ will result in shattering that set. We thus need to show that adding a set $B$ of size other than $d$ also results in some $d$-set being shattered. The argument is symmetric for sets of size smaller/larger than $d$, and we present the case $|B|<d$ only. Consider a family $\ff':=\ff\cup\{B\}$, $|B| < d$.  %Let us show that there is an $d$-set which is shattered by $\ff'$. Indeed, 
    Take a set $X \subset \overline B$, $|X| = d - |B|$. Then $X \in \ff|_{B\cup X}$, and so by the assumption on $\ff$ there is a $d$-set $A\in\mathcal F$ such that $A\cap (B\cup X )= X$. Therefore, $B\cap A = \varnothing$ and thus $A$ is shattered by $\ff'$.
%    The argument is symmetric for  sets $B$ with $|B| > d$.
\end{proof}

Proposition~\ref{equiv} implies that any uniform $d$-saturated family on a ground set of size $n$ can be transformed into a $d$-saturated family of the same size on any larger ground set. Proposition~\ref{almost} tells us that it is sufficient to find a family $\ff\subset {[2d]\choose d}$ that almost shatters any $d$-subset of $[2d].$ The latter property implies that, for any $d$-set $S$, exactly one of $S,\bar S$ must be contained in $\ff$. In other words, $\ff\subset {[2d]\choose d}$ must be an intersecting family of size $\frac 12{2d\choose d}.$ %must be intersecting in order to ensure that exactly one of the $d$-sets $S, \bar S$  for us to find a Both our constructions will be $d$-saturated families $\ff\subseteq \binom{[2(d+1)]}{d+1}$ such that $\ff$ contains exactly one of each pair of complement sets $(X,[2(d+1)]\setminus X)$ with $|X|=|[2(d+1)]\setminus X|$. 

In Section \ref{random}, we show that for $d\ge 14$, if we pick one set from each such complementary pair independently and uniformly at random, then with positive probability, we obtain a family that almost shatters every $d$-set. 

In Section \ref{explicit2}, we give explicit constructions of saturated families for any $d\ge 4$ that are based on intersecting families as above and have an additive combinatorics flavour. For odd $d$, we also obtain a certain classification result. \\ % In order to do so, we will need some Erd\H os-Heilbronn type results.%to resolve a certain  additive combinatorial question that seems to be of independent interest.

Before going on to constructions for general $d$, let us give a concrete example of a saturated family for $d=3$, which proves Theorem~\ref{thmmain} for that case, as well as gives an idea of what type of intersecting families we are going to use for explicit constructions. 

Let $\mathcal{F}\subset \binom{[6]}{3}$ be the family of all $3$-tuples in which the sum of the elements belongs to  $H=\{1,3,4\}$ mod $6$. Note that $\sum_{i=0}^5i = 3$ (mod $6$) and that $H\cap (3-H) = \emptyset$, where here and in what follows the operations are mod $6.$ This implies that $\ff$ contains exactly $1$ set out of each complementary pair of $3$-sets and that, in particular, $\ff$ is intersecting.

\begin{cla} Every $A\in \binom{[6]}{ 3}$ is almost shattered by $\mathcal{F}$.
\end{cla}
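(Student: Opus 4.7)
Fix $A\in\binom{[6]}{3}$, write $\bar A=\{d,e,f\}$, and put $s\equiv\sum A\pmod 6$, so that $\sum\bar A\equiv 3-s$. A subset $T\subseteq A$ lies in $\mathcal{F}|_A$ iff there exists $S\subseteq\bar A$ with $|S|=3-|T|$ and $\sum T+\sum S\in H\pmod 6$. I split the argument by $|T|$.

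For $|T|\in\{0,3\}$ the candidate is unique: $\emptyset\in\mathcal{F}|_A$ iff $\bar A\in\mathcal{F}$ iff $s\in 3-H$, and $A\in\mathcal{F}|_A$ iff $s\in H$. Since $H$ and $3-H$ are disjoint size-$3$ subsets of $\mathbb{Z}_6$, they partition $\mathbb{Z}_6$, so exactly one of $\emptyset,A$ lies in $\mathcal{F}|_A$—precisely the almost-shattering requirement at these two levels.

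For $|T|\in\{1,2\}$ I must show $T$ is always realized. Because $\mathcal{F}$ picks exactly one member per complementary pair, the three candidate $3$-sets for $T$ pair up with those for $A\setminus T$ under $F\mapsto[6]\setminus F$, so $T$ is not realized iff the three candidate $3$-sums for $T$ all lie in $3-H=\{0,2,5\}$. As $d,e,f$ are distinct modulo $6$, the three candidate sums are pairwise distinct, so such a failure would exhaust $\{0,2,5\}$ and total $7\equiv 1\pmod 6$. Direct computation yields this total as $3\sum T-2s\pmod 6$ when $|T|=1$ and as $3\sum T+(3-s)\pmod 6$ when $|T|=2$. Setting either expression equal to $1\pmod 6$ forces very restrictive residues: either $T=\{x\}$ with $x$ odd and $s\in\{1,4\}$, or $T=\{y,z\}$ with $y+z$ even and $s=2$, or $y+z$ odd and $s=5$.

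\textbf{The main obstacle} is to finish off this short list of residual configurations. I expect no failure to occur: for each relevant residue of $s$ there are only three admissible $A$, and within each $A$ only a couple of admissible $T$, so a direct inspection of the three candidate sums should confirm at least one lies in $H$. The only real difficulty is packaging the finite check cleanly; a natural streamlining uses the symmetry $x\mapsto x+2\pmod 6$, which preserves $\mathcal{F}$ (shifting each $3$-sum by $6\equiv 0$) and groups the twenty subsets $A$ into a handful of orbits.
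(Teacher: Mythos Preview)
Your setup and reductions are correct, and the finite check you defer does go through (there are about a dozen residual pairs $(A,T)$, each of which is realized; the $+2$ symmetry you mention is genuine and collapses each residue class of $s$ to a single orbit). One minor remark: the complementary-pairing observation you make is true but not needed---the conclusion ``$T$ fails iff all three candidate sums lie in $\{0,2,5\}$'' follows directly from the definition of $\mathcal F$, without invoking that $\mathcal F$ is intersecting.

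Where your route differs from the paper is in how the contradiction is extracted once the three candidate sums are forced to equal $\{0,2,5\}$. You sum them to get $7\equiv 1\pmod 6$, match against $3\sum T-2s$ or $3\sum T+(3-s)$, and are left with residue constraints on $x$, $y+z$, and $s$ that still require inspection. The paper instead passes to the \emph{complementary} three values and works modulo~$3$. For $|S'|=2$ with $y=\sum S'$: since $y+\bar S=\{0,2,5\}$ and $y+[6]=\mathbb Z_6$, one gets $y+S'\subset\{1,3,4\}$; but then $(y+z_1)+(y+z_2)=3y\equiv 0\pmod 3$, while any two-element sum from $\{1,3,4\}$ is $4$, $5$, or $7$, none divisible by $3$---instant contradiction. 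The $|S'|=1$ case is handled by the same trick after rewriting the pair-sums $z_i+z_j$ as $(\sum\bar S)-z_k$ and passing to complements. This avoids any case enumeration. Your approach is more systematic and makes the obstruction structure explicit, but the paper's mod-$3$ shortcut is cleaner and self-contained.
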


\begin{proof} To prove the claim, it is sufficient to show that, for any $S'\subset S\in {[6]\choose 3}$, $|S'| \in \{1,2\},$ there exists a set $F\in \ff$ such that $F\cap S = S'.$ Assume that the sum of the elements from $S$ is $x$ (mod $6$) and the sum of elements from $S'$ is $y$ (mod $6$). 

If $|S'| = 2$ then we need to find $z\in \bar S$ such that $y+z\in  H.$ If there is no such $z$ then $\{y+z: z\in \bar S\} = \{0,2,5\}$ and so $\{y+z: z\in S'\} \subset \{1,3,4\}$. But then, assuming $S' = \{z_1,z_2\}$, we have that the sum of the two elements $(y+z_1)+(y+z_2) = 3(z_1+z_2) = 0$ (mod $3$), but on the other hand, it must be one of the numbers in $\{1+3, 1+4,3+4\}$ (mod $6$), and none of those numbers is divisible by $3$. This contradiction implies that there must be $z$ with the desired property.

The case $|S'| = 1$ is similar. Put $\bar S =\{z_1,z_2,z_3\}$. Assuming that there is no pair $z_i,z_j\in \bar S$, $i\ne j,$ such that $y+z_i+z_j\in \{1,3,4\},$ we get that $y+\{z_1+z_2,z_1+z_3,z_2+z_3\}  =\{0,2,5\},$ which, passing to the complements and using that $\sum_{i=0}^5 i = 3\ ({\rm mod}\ 6),$ means that $y'+y''+\{z_1,z_2,z_3\} = \{1,3,4\},$ where $\{y,y',y''\} = S.$ But then $y'+y''+\{y,y',y''\} = \{0,2,5\},$ and, in particular, $3(y'+y'')\in \{0+2,2+5,0+5\}$, which is a contradiction. %However, summing all these three elements, we get that $2(z_1+z_2+z_3) = 1$ mod $6$, a contradiction. 
This concluded the proof of the claim.
\end{proof}

Equipped with this claim, we apply Propositions~\ref{almost}, concluding that $\ff$ is saturated. We then apply Proposition~\ref{equiv} (i) and duplicate arbitrary elements sufficiently many times to get a saturated family of VC-dimension $2$ for $n\ge 6$.

%First note that for any $x\in \mathbb{N}$ we have $x+\{0,2,4\}\cap \mathcal{H}\neq \emptyset$ (where the addition is mod $6$). Note also that it is easy to check that if $B\subseteq \{0,1,2,3,4,5\}$ and $|B|\geq 3$, then $\{0,2,4\}\subseteq B+B$.

%Since every remainder class mod $6$ has $6k+1$ elements in $[2n]$, every $n$-set contains at least $3$ different remainders. This, together with the previous paragraph implies that for any $n$-set $A$ and $X\subset A$ with $2\leq |X|\leq n-2$ we have $X\in \mathcal{F}|_{A}$.

%Thus it only remained to check that for every $n$-set $A$ and $X\subseteq \mathbb{A}$ with $|X|=1,n-1$ we have $X\in \mathcal{F}_A$. If the complement of $A$ contains at least $4$ different numbers mod 6, then this is easy since for any $x\in \mathbb{N}$ and $B\subseteq \{0,1,2,3,4,5\}$ with $|B|\geq 4$ we have $x+B\cap \mathcal{H}\neq \emptyset$. Otherwise, $A$ contains all the numbers from $3$ remainder classes mod $6$, and thus there are only a very few cases to check. (It is enough to check for $k=0$, where for checking it is enough to find an isometry between this example and Andrey's example.)

\section{Random construction}\label{random}

Consider a random family $\mathcal F \subset \binom{[2d]}{d}$, obtained in the following way: for each pair $A,\bar A$ of complementary $d$-element sets, we include one of them in $\mathcal F$ independently and uniformly at random. Let $H_A$ be an event that $A \in \mathcal F$.
%As it easy to see, $\dim \mathcal F < n$ since, for each $n$-set $A$, either $\varnothing$ or $A$ does not belong to $\ff|_A$.

For any $d$-set $A$ and set $X \subseteq A$ let  $Q_{A,X}$ stand for  the event that $X\notin \ff|_A$. This event happens if and only if for each pair of complementary $d$-sets $B,\bar B$ such that $A \cap B = X$, we added $\overline B$ to $\mathcal A$. In particular, $$\Prb[Q_{A,X}]=2^{-\binom{d}{|X|}}.$$

\begin{thm}\label{thmrandom} If $d\ge 14$, we have $\Prb\big[\bigcap_{A\in {[2d]\choose d},\varnothing\ne X\subset A}\bar Q_{A,X}\big]>0,$ i.e., with positive probability $\ff$ almost shatters every $A\in {[2d]\choose d}.$
\end{thm}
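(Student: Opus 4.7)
The plan is to apply the Lov\'asz Local Lemma (LLL). The random family $\ff$ is determined by $\frac{1}{2}\binom{2d}{d}$ independent fair coin flips, one per complementary pair $\{B,\bar B\}$, and the bad event $Q_{A,X}$ depends only on the $\binom{d}{|X|}$ pairs satisfying $B\cap A=X$. Two bad events are therefore mutually independent whenever their pair-sets are disjoint, so the LLL dependency graph is fully determined by pair overlaps.

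The first useful simplification groups ``conjugate'' bad events. For $x\in A$, the events $Q_{A,\{x\}}$ and $Q_{A,A\setminus\{x\}}$ depend on the same $d$ pairs and require opposite orientations on every one of them, so they are disjoint. I would merge them into a single event $R_{A,x}:=Q_{A,\{x\}}\cup Q_{A,A\setminus\{x\}}$ of probability $2^{-d+1}$. The ``middle'' events $Q_{A,X}$ with $2\le|X|\le d-2$ have probability at most $2^{-\binom{d}{2}}$, which is astronomically small for $d\ge 14$; in the asymmetric LLL their weights can be set just above their probabilities, so that their contribution to the product is essentially $1$ and their dependencies do not control the argument.

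The heart of the proof is a sufficiently tight dependency count for the hard events $R_{A,x}$. A direct enumeration shows that each of the $d$ pairs on which $R_{A,x}$ depends affects at most $2d^2$ candidate $R$-events, but many of these candidates coincide across the $d$ pairs. For instance, each event $R_{A',x'}$ with $A'=(A\setminus\{x\})\cup\{x'\}$ and $x'\in\bar A$ is dependent on $d-1$ of the $d$ pairs simultaneously. I would classify the dependent events by structural type (based on $|A'\cap A|$, $|A'\cap\bar A|$, and the role of $x'$) and then perform an inclusion-exclusion over the over-counted events to obtain a refined bound $\Delta\le c\,d^3$ on the number of distinct $R$-neighbours of $R_{A,x}$.

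Plugging this bound into the asymmetric LLL reduces the required inequality to one of the form $2^d\ge C\,d^3$, and the hypothesis $d\ge 14$ is chosen precisely so that this inequality holds. The main obstacle is this dependency count: the naive estimate $\Delta\le 2d^3$ misses the threshold at $d=14$ by a small constant factor, so one must use both the merging of the disjoint conjugate events into $R_{A,x}$ and the careful identification of the high-multiplicity dependent events described above in order to save enough to reach the stated $d=14$ threshold.
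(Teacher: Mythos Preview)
Your plan shares the LLL framework with the paper, but it stops short of a proof at precisely the step you yourself identify as the obstacle, and two of the claimed simplifications do not work as stated. Merging $Q_{A,\{x\}}$ and $Q_{A,A\setminus\{x\}}$ into $R_{A,x}$ doubles the probability and halves the degree (the same $d$ pairs are involved; a fixed pair touches $2d^2$ of the $R$-events versus $4d^2$ of the size-$1$ and size-$(d{-}1)$ $Q$-events), so the condition $e\,p\,\Delta\le 1$ is literally unchanged and merging by itself buys nothing. You then promise an inclusion--exclusion over high-multiplicity neighbours to recover the missing constant for $d=14$, but you do not carry it out, and it is not clear the savings suffice. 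Finally, weights ``just above their probabilities'' for the middle events do \emph{not} work: a middle event $Q_{A,X}$ with $|X|=l$ depends on $\binom{d}{l}$ pairs and hence on up to $2d^2\binom{d}{l}$ of the $R$-events, so its own LLL condition contains the factor $(1-x_R)^{2d^2\binom{d}{l}}$, which for $x_R$ of order $d^{-3}$ and $l$ near $d/2$ is $\exp\bigl(-\Theta(\binom{d}{l}/d)\bigr)$, forcing $q_l$ to be vastly larger than $2^{-\binom{d}{l}}$.

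The paper avoids all of this by a single global choice of weights rather than by neighbour counting. It keeps every $Q_{A,X}$ separate and sets $x_{A,X}:=p_{|X|}:=2^{-\max\{\binom{d-1}{|X|},\binom{d-1}{d-|X|}\}}$. Then $\Prb[Q_{A,X}]/p_{|X|}=2^{-\binom{d-1}{|X|-1}}$ for $|X|\le d/2$, and, using $d_{k,l}=\binom{d}{k}\binom{d}{l}^2$ together with $\binom{d-1}{k-1}/\binom{d}{k}=k/d\ge 1/d$, the asymmetric LLL condition for \emph{every} $k$ reduces to the single inequality $\prod_{l=1}^{d-1}(1-p_l)^{\binom{d}{l}^2}\ge 2^{-1/d}$, which is then verified for $d\ge 14$. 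The balancing between the extreme and middle events is achieved entirely by the weights $p_l$; no merging and no sharp dependency enumeration are needed, and this is the idea your sketch is missing.
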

Equipped with this theorem, we can conclude the proof of Theorem~\ref{thmmain} as in the case of $d=3$, given in the previous section.
%\subsection{Naive LLL with less naive coefficients}

We shall use Lov\'asz Local Lemma to show the validity of Theorem~\ref{thmrandom}. %that with positive probability none of the events $Q_{A,X}$ holds.
\begin{lemma}[Lov\'asz Local Lemma] Let $B_1,\ldots, B_m$ be events in an arbitrary probability space. For each $i,$ let $S_i\subset [m]$ be such that $B_i$ is independent of the sigma-algebra generated by the events $\{B_j: j\notin S_i\cup\{i\}\}$. Assume that there are real numbers $x_1,\ldots, x_m$ such that $0\le x_i<1$ and $$\Prb[B_i]\le x_i\prod_{j\in S_i}(1-x_j).$$
Then with positive probability no event $B_i$ holds.
\end{lemma}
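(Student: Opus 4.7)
The plan is to prove the Lovász Local Lemma via the standard Erdős--Lovász induction. The key auxiliary claim is:
\emph{for every index $i \in [m]$ and every $T \subseteq [m] \setminus \{i\}$ with $\Prb[\bigcap_{j \in T}\bar B_j] > 0$, one has $\Prb[B_i \mid \bigcap_{j \in T}\bar B_j] \le x_i$.} Once this is established, the chain rule gives
$$\Prb\bigl[\bigcap_{i=1}^m \bar B_i\bigr] = \prod_{i=1}^m \Prb\bigl[\bar B_i \mid \bigcap_{j<i}\bar B_j\bigr] \ge \prod_{i=1}^m (1 - x_i) > 0,$$
where the positivity of each conditioning event, needed to justify the chain rule, follows from the auxiliary claim itself (it guarantees $\Prb[\bar B_j \mid \ldots] \ge 1 - x_j > 0$ at each step).

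I would prove the auxiliary claim by induction on $|T|$. The base case $T = \varnothing$ is immediate from the hypothesis, since $\Prb[B_i] \le x_i \prod_{j \in S_i}(1 - x_j) \le x_i$. For the inductive step, partition $T = T_1 \sqcup T_2$ with $T_1 := T \cap S_i$ and $T_2 := T \setminus S_i$, and write
$$\Prb\bigl[B_i \mid \bigcap_{j \in T}\bar B_j\bigr] = \frac{\Prb\bigl[B_i \cap \bigcap_{j \in T_1}\bar B_j \mid \bigcap_{j \in T_2}\bar B_j\bigr]}{\Prb\bigl[\bigcap_{j \in T_1}\bar B_j \mid \bigcap_{j \in T_2}\bar B_j\bigr]}.$$
The numerator is at most $\Prb[B_i \mid \bigcap_{j \in T_2}\bar B_j] = \Prb[B_i] \le x_i \prod_{j \in S_i}(1 - x_j)$, using that $B_i$ is independent of the sigma-algebra generated by $\{B_j : j \in T_2\} \subseteq \{B_j : j \notin S_i \cup \{i\}\}$. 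For the denominator, I enumerate $T_1 = \{j_1, \ldots, j_r\}$ and expand it telescopically as
$$\prod_{k=1}^r \Prb\bigl[\bar B_{j_k} \mid \bar B_{j_1}\cap\cdots\cap\bar B_{j_{k-1}}\cap \bigcap_{j\in T_2}\bar B_j\bigr],$$
applying the inductive hypothesis to each factor (each conditioning set has size strictly less than $|T|$), which yields a lower bound of $1 - x_{j_k}$ per factor. Dividing, the $(1 - x_j)$ factors for $j \in T_1$ cancel, the remaining factors for $j \in S_i \setminus T_1$ are bounded above by $1$, and we obtain the desired bound $x_i$.

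The main obstacle is administrative rather than conceptual: one must be scrupulous about which events are being conditioned on, check that all the conditional probabilities in sight are well-defined, invoke the independence hypothesis against the correct sub-sigma-algebra (generated by indices outside $S_i \cup \{i\}$, which is why the split $T = T_1 \sqcup T_2$ is the right one), and verify that the telescoping expansion of the denominator applies the inductive hypothesis only to strictly smaller conditioning sets. The combinatorial core of the argument is the single ratio inequality above; everything else is careful bookkeeping.
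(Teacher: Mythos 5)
The paper states the Lov\'asz Local Lemma as a known result and gives no proof of it, so there is nothing internal to compare against; your argument is the classical Erd\H{o}s--Lov\'asz induction and it is correct. You use the hypothesis exactly as stated (the event $\bigcap_{j\in T_2}\bar B_j$ lies in the sigma-algebra generated by $\{B_j: j\notin S_i\cup\{i\}\}$, so conditioning on it leaves $\Prb[B_i]$ unchanged), and the ratio plus telescoping step is the standard one. The only point to tighten is the positivity issue you flag at the end: rather than deriving it ``from the auxiliary claim itself,'' the clean way is to strengthen the induction to prove simultaneously that $\Prb\bigl[\bigcap_{j\in T}\bar B_j\bigr]\ge\prod_{j\in T}(1-x_j)>0$ and that $\Prb\bigl[B_i\mid\bigcap_{j\in T}\bar B_j\bigr]\le x_i$, so every conditional probability appearing in the numerator, the denominator, and the telescoping factors is well-defined at the moment it is used; with that bookkeeping made explicit the proof is complete.
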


Whether or not $H_B$ holds only depends on those events $Q_{A,X}$ for which $A\cap B = X$. Thus, an event $H_B$ depends on $\binom{d}{k}^2$ events $Q_{A,X}$ with $|X| = k$. Therefore, an event $Q_{A,X}$ depends on $$d_{|X|,l} := \binom{d}{|X|} \binom{d}{l}^2$$ events $Q_{B,Y}$ with $|Y| = l$.

%Let us try to apply LLL to show that with positive probability none of the events $Q_{A,X}$ holds. 
To apply LLL, we need to choose the coefficients $x_i$. We put $$x_{A,X} := p_{|X|} := 2^{-\max\{{d-1\choose |X|},{d-1\choose d-|X|}\}}.$$
The cases $|X|\le d/2$ and $|X|\ge d/2$ are symmetric, and thus, in what follows, we assume that $|X|\le d/2$. Then the maximum in the expression above is attained on the first binomial coefficient and $\Prb[Q_{A,X}]/p_{|X|} = 2^{-{d\choose |X|}+{d-1\choose |X|}} = 2^{-{d-1\choose |X|-1}}$. We need to show that for each $1 \le k \le d/2$ and $|X|=k$ we have
$$
\Prb[Q_{A,X}] \le p_{|X|} \prod_{l=1}^{d-1} (1 - p_{l})^{d_{k,l}} \ \ \ \ \ \Leftrightarrow \ \ \ \ \ \prod_{l=1}^{d-1} (1 - p_{l})^{d_{k,l}}\ge 2^{-{d-1\choose k-1}}.
$$
Recall that $d_{k,l} = {d\choose k}{d\choose \ell}^2$ and that ${d-1\choose k-1}/{d\choose k} = 
\frac k{d}$ and is minimized for $k = 1$. Thus, to verify the last displayed inequality, it is sufficient to show that 
\begin{equation}\label{eqlll}\prod_{l=1}^{d-1} (1 - p_{l})^{{d\choose l}^2}\ge 2^{-\frac 1d}.\end{equation}
For $d/2> l\ge 2$ and $d\ge 10$ $$\frac{{d\choose l}^22^{-{d-1\choose l}}}{{d\choose l-1}^22^{-{d-1\choose l-1}}}=\frac {(d-l)^2}{l^2}2^{-\frac {d-2l}{d-l}{d-1\choose l}}\le (d-1)^22^{-\frac{(d-1)(d-4)}2}<\frac 1{10},$$
and so we have $$\prod_{l=1}^{d-1} (1 - p_{l})^{{d\choose l}^2}\ge \prod_{l=1}^{d/2} (1 - p_{l})^{2{d\choose l}^2} \ge 1-2\sum_{l=1}^{d/2} {d\choose l}^2 2^{-{d-1\choose \ell}}\ge 1-3d^2 2^{1-d}.$$
The last expression is at least $1-\frac 2d$ for any $d$ such that $2^d\ge 12 d^3.$ The latter holds for $d\ge 16$. On the other hand, for $d\ge 10$ we have $2^{-1/d}<1-\frac 1{2d}$, and thus \eqref{eqlll} holds for $d\ge 16.$ By doing a more careful calculation, one can verify that \eqref{eqlll} holds for any $d\ge 14$.

%$2^{d-1}$This clearly holds for sufficiently large $d$. (For roughly $2^{n-1}\ge 4n^3,$ since the LHS is at least $\prod_{l=1}^{n/2} (1 - p_{l})^{2{n\choose l}^2}\gtrsim 1-2{n\choose 1}^2p_1 = 1-2n^22^{1-n}$, while the RHS is at most $1-\frac 1{2n}$.)
%But
%$$
%\prod_{l=1}^{n-1} (1 - p_{l})^{d_{k,l}} \le \prod_{l=1}^{n-1} (1 - 2^{-\binom{n}{l}})^{\binom{n}{k}\binom{n}{l}^2} \le
%\exp\left( - \sum_{l=1}^{n-1} 2^{-\binom{n}{l}} \binom{n}{k}\binom{n}{l}^2 \right)
%$$
%and for $k = n/2$ and $l = 1$ the term $2^{-n} n^2 \binom{n}{n/2} \to \infty$.

\section{Explicit constructions}\label{explicit2}
For odd $d\ge 7$ we find explicit constructions
%that are the same type as our random constructions. That is, the construction is 
of intersecting families $\mathcal{F}\in \binom{[2d]}{d}$ which almost shatters any $A\in\binom{[2d]}{d}$. We then conclude the proof as in the case $d=3.$ The details of this are in Subsection~\ref{sec31}. 

For even $d\geq 6$ the explicit constructions we found are slightly different. They consist of a maximal intersecting family in $\binom{[2d]}{d}$ and a few other sets, which form a saturated (and not necessarily almost-shattering) family. %in Subsection~\ref{sec32}. 
Thus these constructions are not necessarily uniform, moreover they may contain two sets whose Hamming distance is one. Therefore, in order to use Proposition~\ref{equiv} (i) and extend  the construction to larger $n$, we cannot simply duplicate an arbitrary element. % by using Proposition \ref{equiv}. 
However, we will make sure to have a distinguished element, for which the condition of Proposition~\ref{equiv} (i) holds. The details are given in Subsection \ref{sec32}.

Finally, in Subsection~\ref{sec33}, we give two examples of saturated families for $d=4,5$. Those together with the example for $d=3$ in the introduction cover all values of $d\ge 3$, as stated in Theorem~\ref{thmmain}. %The construction obtained this way for even $d$ is not necessarily uniform,

\subsection{Odd $d$}\label{sec31}
Fix an integer $d$ and  consider a set $X\subset [2d]$ of size $d$. Define $$\ff(X):=\Big\{F\in {[2d]\choose d}: \sum_{i\in F} i \in X{\rm \ (mod\ }2d)\Big\}.$$

\begin{thm}\label{thmnew1}
Let $d = 2k+1$. Then $\ff(X)$ almost shatters every $S\in {[2d]\choose d}$ if and only if the following three conditions hold:
\begin{enumerate} \item $|X|=d$ and $X\cap (d-X) = \emptyset\ {\rm (mod\ } 2d)$;
\item $X$ contains both odd and even elements;
\item for every $u\in X,$ $\sum_{w\in X\setminus\{u\}} w\ne 0\ {\rm (mod\ }d).$
\end{enumerate}
\end{thm}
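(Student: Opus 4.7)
The plan is to prove both directions. Throughout, I use the following reformulation: for $S'\subsetneq A$ with $|S'|=k$ and $y=\sum S'$, the existence of $F\in\ff(X)$ with $F\cap A=S'$ amounts to finding $T\in\binom{\bar A}{d-k}$ with $y+\sum T\in X\pmod{2d}$, and the bad case (no such $T$) is equivalent to $y+\Sigma_{d-k}(\bar A)\subset d-X$. Using the identity $\sigma(F)+\sigma(\bar F)=d$ in $\Z_{2d}$, which swaps $X$ with $d-X$, the bad case is also equivalent to $\Sigma_k(\bar A)\subset X-y'$ with $y'=\sum(A\setminus S')=\sum A-y$. Since the conclusion is symmetric under $S'\leftrightarrow A\setminus S'$ (swapping $k\leftrightarrow d-k$), I may always assume $k\le d/2$.

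For necessity, assume $\ff(X)$ almost shatters every $A\in\binom{[2d]}{d}$. Almost shattering $A$ requires the trace to miss exactly one of $\varnothing,A$, equivalently exactly one of $A,\bar A$ lies in $\ff(X)$; since every residue in $\Z_{2d}$ is realized as $\sum A$ for some $d$-set $A$, this gives $X\cap(d-X)=\varnothing$, i.e.\ (1). For (2), if $X$ were monochromatic in parity I would take $\bar A$ of the opposite parity (e.g.\ $\bar A=\{1,3,\ldots,2d-1\}$ when $X$ is all even) and $S'=A\setminus\{0\}$: every extension $y+z$ with $z\in\bar A$ then has the wrong parity, so no $T$ works. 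For (3), if $\sum X\equiv u\pmod d$ for some $u\in X$ then either $\sum X\in X$ or $\sum X-d\in X$ in $\Z_{2d}$; picking $A=X-y$ with $y$ even or odd according to the subcase, and $S'=A\setminus\{a\}$ for the uniquely forced element $a=\sum X+(d-1)y$, a direct computation gives $y+\bar A=d-X$, so the singleton extension fails.

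For sufficiency, assume (1), (2), (3) and fix $(A,S')$ with $1\le k=|S'|\le d/2$ and $y=\sum S'$; suppose toward a contradiction that the bad case holds. The case $k=1$ is handled directly: $\Sigma_1(\bar A)=\bar A$, so $\bar A\subset X-y'$, which is equality since both sides have size $d$, meaning $\bar A$ is a translate of $X$. Chasing the requirement $a\in A$ for the unique $a\in A\setminus S'$ and computing $\sum A$ in terms of $\sum X$ forces $\sum X+dy\in X$; since $dy\in\{0,d\}$ in $\Z_{2d}$ according to the parity of $y$, this yields either $\sum X\in X$ or $\sum X-d\in X$, directly contradicting (3).

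The middle cases $2\le k\le d/2$ are where I expect the main obstacle. Here $\Sigma_k(\bar A)$ consists of $\binom{d}{k}$ sums (with possible repetitions) that must all land in the $d$-element set $X-y'$. A first consequence: varying a $(k-1)$-subset $U\subset\bar A$ and two distinct $b_1,b_2\in\bar A\setminus U$, both $\sum U+b_1$ and $\sum U+b_2$ lie in $X-y'$, so $b_2-b_1\in X-X$; hence $\bar A-\bar A\subset X-X$. More generally, $\bar A\setminus U\subset X+c_U$ where $c_U=-y'-\sum U$, and comparing these translates as $U$ varies should produce a dichotomy: either (a) $\bar A$ lies in a proper subgroup of $\Z_{2d}$ — for odd $d$ the only candidate is the subgroup of even residues, in which case $X-y'$ would also have to sit in a single parity class, contradicting condition (2) — or (b) $\bar A$ is itself a translate of $X$, reducing to the $k=1$ analysis and contradicting (3). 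Making this dichotomy rigorous uniformly in $k$ is the technical heart of the argument, and I expect it to require Kneser-type lower bounds on iterated sumsets in $\Z_{2d}$ combined with the parity bookkeeping enabled by (2).
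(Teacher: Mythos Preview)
Your necessity arguments and your treatment of the extreme sizes $k\in\{1,d-1\}$ are along the right lines and essentially match the paper's Lemma~\ref{fullempty} and Lemma~\ref{projection1} (up to some sloppiness: ``the unique $a\in A\setminus S'$'' should be ``the unique $a\in S'$'', and the ``direct computation'' for necessity of condition~3 deserves to be written out).

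The genuine gap is the middle range $2\le k\le d-2$, and here your outlined dichotomy is both unproven and mis-stated. You want to conclude from $\Sigma_k(\bar A)\subset X-y'$ (a set of size $d$) that either (a) $\bar A$ lies in a parity class, or (b) $\bar A$ is a translate of $X$. But (b) is not what happens: a set with $|\Sigma_k(\bar A)|\le |\bar A|$ need not be a translate of any particular set, and your observation $\bar A\setminus U\subset X+c_U$ for varying $(k-1)$-subsets $U$ does not pin $\bar A$ down to a single translate of $X$, since the shift $c_U$ moves with $U$. Moreover, ``Kneser-type'' bounds concern ordinary sumsets, whereas what you need is a bound on the \emph{restricted} sumset $\Sigma_k(\bar A)$ in a cyclic group of composite order; Kneser does not give this.

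What actually closes the gap is the Girard--Griffiths--Hamidoune theorem on restricted sumsets, which is what the paper invokes (Corollary~\ref{addcom}): for $\bar A\subset\Z_{2d}$ of size $d\ge 5$ containing both parities, one has $|\Sigma_s(\bar A)|>d$ for every $2\le s\le d-2$. This immediately rules out $\Sigma_{d-k}(\bar A)\subset d-X$ by cardinality alone, without any structural dichotomy. The only surviving case is when $\bar A$ is a full parity class, and then $\Sigma_{d-k}(\bar A)$ already contains all residues of one parity, so condition~(2) guarantees it meets $X$. In other words, the ``technical heart'' you anticipate is not a new argument but a citation; once you replace your dichotomy with the Girard--Griffiths--Hamidoune bound, the middle range is a one-line pigeonhole, and case~(b) of your dichotomy simply never occurs.
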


It is not difficult to  find residue classes that satisfy the three conditions from the theorem. 
We use the notation $[a,b]:=\{a,a+1,\ldots, b\}$. Then one example is 
% $$X:= \{1,\ldots, k,2k+1,\ldots, 3k+1\}$$ 
$$X:= [1,k] \cup [2k+1, 3k+1]$$ 
for odd $k.$ Indeed, both 1. and 2. are straightforward to check. To see 3., we note that all elements of $X$ are $0,1,\ldots, k \ ({\rm mod\ }2k+1),$ while $$\sum_{x\in X} x= 2\cdot {k+1\choose 2}  = k(k+1) = k+1+\frac {k-1}2(2k+2) = \frac{3k+1}2\ ({\rm mod\ }2k+1).$$ Thus, condition 3. is satisfied.

Another example for odd $k$ is
$$X:= \{1,3\ldots, 2k-1,2k+1,2k+2,2k+4\ldots, 4k\}.$$

The previous examples give construction in the case $d = 4r+3$ for some positive integer $r$. In case $d =4r+1\ge 9$, we can take 
$$X:= \{0\}\cup A\cup (d+A)\ ({\rm mod}\ 2d), \ \text{where } A =  \big\{1,\ldots, 2r-t-1,2r-t,2r+1,2r+2\ldots,2r+t\big\}$$
for some appropriately chosen $t\ge 1.$ E.g., for $k=4$ we can take $t=1$, getting the set $\{1,2,3,5\}\ ({\rm mod\ }9)$. It is not difficult to check the first two conditions. As for the third condition, note that $2(\sum_{i=1}^{2r-t}i+\sum_{i=2r+1}^{2r+t}i) = r+t^2{\rm \ (mod\ }4r+1).$ By taking $t$ such that $2r+t<r+t^2<4r+1$, which is always possible for $r\ge 2,$ we make sure that the third condition is satisfied.

\vskip+0.2cm

%We shall show that $\ff_X)$ satisfies the conditions of Proposition~1.\\

%\textbf{Remark. } Actually, $\ff(X)$ satisfies the conditions of Proposition~1 if and only if $\ff(X)$ satisfies the conditions of the theorem.\\
Let us prove Theorem~\ref{thmnew1}.

Take any such $X$. In what follows, we treat $X$ as a set of residues modulo $2d,$ and the inclusions/equality between $X$ and other sets should be interpreted as those for sets of residues modulo $2d.$ Most sums are also taken modulo $2d,$ which should be clear from the context. The proof of the theorem consists of the following lemmas.
\begin{lemma}\label{fullempty}
For any $A\in {[2d]\choose d},$ $\ff(X)|_A$ contains exactly $1$ out of  $\emptyset, A$ if and only if the first condition from Theorem~\ref{thmnew1} holds.
\end{lemma}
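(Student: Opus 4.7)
The plan is to reduce the lemma to a clean number-theoretic statement about the set of residues $X$. The first observation is purely set-theoretic: among all $d$-subsets of $[2d]$, the only one whose intersection with a fixed $A\in\binom{[2d]}{d}$ equals $A$ is $A$ itself, and the only one whose intersection with $A$ equals $\emptyset$ is $\bar A$. Thus the quantity $|\{\emptyset, A\} \cap \mathcal{F}(X)|_A|$ is exactly $|\{A,\bar A\}\cap \mathcal{F}(X)|$, so the lemma is really about membership of the pair $A,\bar A$ in $\mathcal{F}(X)$.

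Next, I would introduce $s:=\sum_{i\in A}i \pmod{2d}$ and use the identity $\sum_{i=1}^{2d}i = d(2d+1)\equiv d\pmod{2d}$ to conclude that $\sum_{i\in \bar A}i\equiv d-s\pmod{2d}$. By the definition of $\mathcal{F}(X)$, this gives $A\in \mathcal{F}(X)\iff s\in X$ and $\bar A\in \mathcal{F}(X)\iff d-s\in X$. So ``$\mathcal{F}(X)|_A$ contains exactly one of $\emptyset,A$'' is equivalent to ``exactly one of $s,\,d-s$ lies in $X$.''

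The key remaining step is to show that as $A$ ranges over $\binom{[2d]}{d}$, the residue $s$ attains every value in $\mathbb{Z}/2d\mathbb{Z}$. I would prove this by a simple swap argument: starting from $A_0=\{1,\dots,d\}$ with sum $d(d+1)/2$, repeatedly exchange one element $i\in A$ with $i+1\in \bar A$ (or vice versa); each such swap changes the sum by $\pm 1$, so every residue modulo $2d$ is reached. I expect this to be the main (though still elementary) step, since the lemma fails without it; in particular one could worry about small-$d$ degeneracies, but the explicit chain starting at $\{1,\dots,d\}$ works for every $d\ge 2$.

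Combining these observations: the condition of the lemma holds for every $A$ if and only if for every $s\in\mathbb{Z}/2d\mathbb{Z}$ exactly one of $s,\,d-s$ belongs to $X$. Equivalently, $X$ and $d-X$ partition $\mathbb{Z}/2d\mathbb{Z}$, which immediately yields $X\cap(d-X)=\emptyset$ together with $2|X|=2d$, i.e.\ $|X|=d$; and conversely these two conditions force the partition. This is precisely condition (1) of Theorem~\ref{thmnew1}, completing the proof.
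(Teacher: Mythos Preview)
Your argument is correct and follows the same route as the paper: reduce to the statement that exactly one of $A,\bar A$ lies in $\ff(X)$, use $\sum_{i=1}^{2d} i\equiv d\pmod{2d}$ to translate this into the residue condition, and identify the latter with condition~1. The paper's proof is the one-line version of this; your additional verification that every residue in $\mathbb Z/2d$ is realised as $\sum_{i\in A}i$ for some $A\in\binom{[2d]}{d}$ is a detail the paper leaves implicit but which is indeed needed for the ``only if'' direction.
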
\label{lem111}
\begin{proof}
Note that $\sum_{i=1}^{2d} i = d\ ({\rm mod\ }2d)$. Thus, the first condition in the theorem is equivalent to saying that for any $B\in {[2d]\choose d},$ $B\in \ff(X)$ if and only if $\bar B\notin \ff(X)$. 
\end{proof}

We will need some lower bounds on a special instance of the generalized Erd\H os--Heilbronn problem (originally \cite{ErdHei}, for a recent survey see Chapter IV A.3 in \cite{Baj18}). In a group $G$, the restricted $s$-sumset $\sum\binom{A}{s}$ for some $A\subseteq G$ and integer $s\ge 2$ is the set of all different sums of $s$ distinct elements from $A$ (in the number theory literature, the notation $s\wedge A$ is used). %Hamidoune, Llad\'o, and Serra \cite{HamLlaSer00} proved that in the cyclic group $\mathbb{Z}_n$ we have $|\sum\binom{A}{2}|\ge \min \{n,3|A|/2\}$ unless $A$ contains only numbers of the same parity, but only if $|A|\ge 33$. 
We will be interested in the case $G=\mathbb{Z}_{2d}$ and $|A|=d$. We will use a special case of the following result from \cite{GerGriHam12}. A \textit{2-coset} is a coset of a subgroup with all non-zero elements having order 2, and an \textit{almost 2-coset} is a 2-coset with possibly one element removed.%For us a weaker bound will suffice, but we will need it for small values of $|A|$ and also the same lower bound will be required for $|\sum\binom{A}{s}|$.

\begin{thm}[Girard, Griffiths, Hamidoune \cite{GerGriHam12}] Let $A$ be a subset of the abelian group $G$ and let $2 \le s \le |A|-2$. Then
$|\sum {A\choose s}| \ge |A|$
unless $s \in \{2, |A| -2\}$ and $A$ is a 2-coset. Furthermore
$|\sum {A\choose s}| > |A|$
unless $A$ is a coset of a subgroup of $G$ or $s \in \{2, |A|-2\}$ and

(i) $A$ is an almost 2-coset, or

(ii) $|A| = 4$ and $A$ is the union of two cosets of a subgroup of order 2.
\end{thm}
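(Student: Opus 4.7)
The plan is to proceed by induction on $s$, combined with the duality
\[
\sum\binom{A}{s} \;=\; \sigma(A) - \sum\binom{A}{|A|-s}, \qquad \sigma(A):=\sum_{a\in A} a,
\]
which lets me assume $2\le s\le |A|/2$; in particular the case $s=|A|-2$ is handled automatically once $s=2$ is. This reduction is also what forces the exceptional range $s\in\{2,|A|-2\}$ in the statement to appear only at the endpoints.

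For the base case $s=2$, the inequality $|\sum\binom{A}{2}|\ge|A|$ (for $|A|\ge 4$) is a restricted-sumset analogue of Cauchy--Davenport. In $\mathbb{Z}_p$ it is the Dias da Silva--Hamidoune theorem, $|\sum\binom{A}{2}|\ge \min(p,2|A|-3)$, and its extensions to arbitrary finite abelian groups (K\'arolyi, Hamidoune--Llad\'o--Serra, and others) give a bound of the same shape. Since $2|A|-3>|A|$ for $|A|\ge 4$, both the weak and the strict lower bounds drop out immediately. The equality and near-equality characterisation is the sharp part: it is extracted from the corresponding inverse theorems, which say that when $|\sum\binom{A}{2}|$ is significantly smaller than $2|A|-3$, the set $A$ must be almost contained in a coset of a subgroup of exponent $\le 2$. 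This is precisely what produces the exceptional families: full cosets, 2-cosets, almost 2-cosets, and the sporadic $|A|=4$ configuration in item (ii).

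For the inductive step with $s\ge 3$, I would use the slice inclusion
\[
\sum\binom{A}{s} \;\supseteq\; S_a\;:=\;a+\sum\binom{A\setminus\{a\}}{s-1}, \qquad a\in A,
\]
which reduces to a smaller ground set. By the inductive hypothesis applied to $A\setminus\{a\}$ (legal because $2\le s-1\le(|A|-1)-2$, using $s\le|A|/2$), each slice satisfies $|S_a|\ge|A|-1$. To upgrade from $|A|-1$ to $|A|$, pick two distinct $a,b\in A$ and exploit
\[
S_a\cap S_b \;\supseteq\; a+b+\sum\binom{A\setminus\{a,b\}}{s-2}.
\]
For $s\ge 4$ another use of induction bounds this intersection below by $|A|-2$, and for $s=3$ the right-hand side is a single translate of $A\setminus\{a,b\}$, still of size $|A|-2$. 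A one-line inclusion--exclusion then yields $|S_a\cup S_b|\ge|A|$, and strict inequality follows whenever at least one pair $(a,b)$ avoids simultaneous tightness of all three bounds above.

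The hardest part will be the inverse analysis needed to pin down the exact list of exceptional configurations. One must determine when the inductive bound is tight for \emph{every} pair $(a,b)$, so that all slices $S_a$ coincide and all intersections meet their lower bound; this is a rigidity statement. The natural tool is Kneser's theorem applied to the stabiliser $H$ of $\sum\binom{A}{s}$: a nontrivial $H$ forces $A$ to lie close to a union of $H$-cosets, after which a short case analysis either collapses $A$ into a single coset, or pushes $A$ into the 2-coset / almost 2-coset regime, or lands in the sporadic $|A|=4$ case of~(ii). I expect the forward bound to come out of the induction without much trouble, with essentially all the difficulty of the theorem concentrated in this extremal classification.
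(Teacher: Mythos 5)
The paper does not prove this statement at all: it is imported verbatim as a known theorem of Girard, Griffiths and Hamidoune \cite{GerGriHam12} and used as a black box (via Corollary~\ref{addcom}), so there is no in-paper proof to compare yours with; the only question is whether your sketch would stand on its own, and as written it would not. The base case $s=2$ is the heart of the matter and your treatment of it is wrong: the bound $|\sum\binom{A}{2}|\ge 2|A|-3$ is a $\mathbb{Z}_p$ (Dias da Silva--Hamidoune) statement, and its extensions to general abelian groups (K\'arolyi, Hamidoune--Llad\'o--Serra) only give $\min\{p(G),2|A|-3\}$ where $p(G)$ is the smallest prime divisor of $|G|$. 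In groups of even order --- precisely the setting $\mathbb{Z}_{2d}$ this paper needs, and precisely where 2-cosets exist --- that bound is vacuous, and indeed $|\sum\binom{A}{2}|$ genuinely equals $|A|-1$ for a 2-coset and $|A|$ for a coset, so nothing ``drops out immediately''; the exceptional structures are not mild near-equality cases of a $2|A|-3$ inverse theorem but the actual extremal regime, and essentially all of the theorem's content at $s=2$ lives there.

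The inductive step also has a concrete logical error: from $|S_a|,|S_b|\ge |A|-1$ together with the \emph{lower} bound $|S_a\cap S_b|\ge |A|-2$ (which is all the containment $S_a\cap S_b\supseteq a+b+\sum\binom{A\setminus\{a,b\}}{s-2}$ can give), inclusion--exclusion yields only $|S_a\cup S_b|\ge |A|-1$; to reach $|A|$ you would need an \emph{upper} bound on the intersection, or a proof that $S_a\ne S_b$ when both slices are tight, which is exactly the rigidity question you postpone. Moreover, the inductive hypothesis you invoke for $A\setminus\{a\}$ carries its own exceptions (e.g.\ when $s-1=2$ and $A\setminus\{a\}$ is a 2-coset, or $A\setminus\{a\}$ is a coset), and you never track how these propagate, so the claimed bound $|S_a|\ge|A|-1$ is not available in precisely the cases that matter. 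Together with the deferred Kneser-stabiliser classification, this means the proposal is an outline of a plausible strategy rather than a proof; if you only need the statement for the paper's purposes, the honest course is to cite \cite{GerGriHam12} as the authors do.
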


\begin{cor}\label{addcom} Consider $A\subset \mathbb Z_{2d}$, $|A| = d\ge 5$, such that $A$ contains both odd and even elements. Then for each $2\le s\le d-2,$ we have $|\sum\binom{A}{s}|>d$.
\end{cor}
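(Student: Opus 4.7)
The plan is to apply the Girard--Griffiths--Hamidoune theorem directly with $G=\mathbb{Z}_{2d}$ and check that each exceptional case which could prevent the strict inequality $|\sum\binom{A}{s}|>|A|$ is ruled out by the hypotheses $|A|=d\ge 5$ and that $A$ meets both parity classes. Once all the exceptions are disposed of, the theorem yields $|\sum\binom{A}{s}|>|A|=d$ for every $2\le s\le d-2$, which is exactly the desired statement.

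First I would handle the case in which $A$ is a coset of a subgroup of $\mathbb{Z}_{2d}$. Such a coset has cardinality equal to that of the underlying subgroup, so the subgroup would have order $d$. But $\mathbb{Z}_{2d}$ has a unique subgroup of order $d$, namely $2\mathbb{Z}_{2d}=\{0,2,\dots,2d-2\}$, and its two cosets are precisely the set of even residues and the set of odd residues modulo $2d$. Since $A$ is assumed to contain both an odd and an even element, $A$ cannot be a coset of any subgroup.

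Next I would rule out the almost $2$-coset exception. A $2$-coset is a coset of a subgroup whose non-zero elements all have order $2$; in the cyclic group $\mathbb{Z}_{2d}$ the only element of order $2$ is $d$, so the largest such subgroup is $\{0,d\}$, and its cosets have size at most $2$. Hence an almost $2$-coset has size at most $2$, while $|A|=d\ge 5$. The final exception ($|A|=4$ with $A$ a union of two cosets of an order-$2$ subgroup) is ruled out by the same size reason, $d\ge 5>4$.

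No genuine obstacle arises: with all three exceptional configurations excluded, the Girard--Griffiths--Hamidoune theorem delivers the strict inequality. The only point requiring any care at all is the identification of the unique index-$2$ subgroup of $\mathbb{Z}_{2d}$ with the class of even residues, which is what makes the bi-parity hypothesis on $A$ the precise condition needed to kill the coset exception.
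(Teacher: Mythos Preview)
Your proof is correct and is exactly the verification the paper has in mind: the corollary is stated without proof, as an immediate consequence of the Girard--Griffiths--Hamidoune theorem, and your case analysis of the exceptional configurations (coset of the unique index-$2$ subgroup, almost $2$-cosets of size $\le 2$, and the $|A|=4$ case) is precisely what is needed to justify that deduction.
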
%\label{additive_lemma}
%We defer the proof of this lemma until the end of the section.

 %The next lemma is an easy implication of Lemma~\ref{additive_lemma} and shows that condition 2 is sufficient to guarantee that we have all projections that are of size $2$ to $d-2$.

\begin{lemma} The family $\ff(X)|_A$ contains 
all the sets of size $s,$ $2\le s\le d-2$ for any $A\in {[2d]\choose d}$ if and only if condition 2 from Theorem~\ref{thmnew1} holds.
%If condition 2 from Theorem~\ref{thmnew1} holds then for any $A\in {[2d]\choose d},$ $\ff_X|_A$ contains  all the sets of size .
\end{lemma}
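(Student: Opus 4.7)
The plan is to reformulate the shattering condition as a statement about restricted subset sums. Fix $A \in \binom{[2d]}{d}$, write $\bar A = [2d] \setminus A$, and for $S \subseteq A$ with $|S| = s$ set $y = \sum_{i \in S} i$ and $m = d - s$. Then $S \in \ff(X)|_A$ if and only if some $T \subseteq \bar A$ with $|T| = m$ satisfies $y + \sum_{i \in T} i \in X \pmod{2d}$; equivalently,
\[
\sum \binom{\bar A}{m} \cap (X - y) \ne \emptyset \quad \text{in } \mathbb{Z}_{2d},
\]
where $\sum \binom{\bar A}{m}$ denotes the restricted $m$-fold sumset. In this language the task is to show that such an intersection is guaranteed for every $A$, every $S$ with $2 \le |S| \le d - 2$, and every resulting $y$ if and only if condition 2 holds.

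For the sufficiency direction I would split on the parities appearing in $\bar A$. If $\bar A$ has elements of both parities, Corollary~\ref{addcom} gives $\bigl|\sum \binom{\bar A}{m}\bigr| > d$, and since $|X - y| = d$ inside an ambient group of order $2d$, pigeonhole forces the two sets to meet. The main obstacle is the single-parity case, where Corollary~\ref{addcom} does not apply: then $A$ and $\bar A$ are the sets of even and odd elements of $[2d]$ in some order. Writing $\bar A = \{2j + c : 0 \le j \le d-1\}$ for the appropriate offset $c$, I would reduce computing $\sum \binom{\bar A}{m}$ mod $2d$ to computing $\sum \binom{\{0, \ldots, d-1\}}{m}$ mod $d$. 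A standard swapping argument shows the latter is the full integer interval from $\binom{m}{2}$ to $md - \binom{m+1}{2}$, a range of $m(d-m) + 1 \ge d + 1$ consecutive integers when $2 \le m \le d - 2$, and so covers all of $\mathbb{Z}_d$. Translating back, $\sum \binom{\bar A}{m}$ is an entire parity class of $\mathbb{Z}_{2d}$, a set of exactly $d$ elements. Condition 2 ensures that $X$, and hence $X - y$, meets both parity classes, so the intersection is nonempty.

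For necessity, I would argue by contrapositive. Suppose $X$ is contained in a single parity class; by symmetry, assume $X \subseteq 2\mathbb{Z}_{2d}$. Take $A = \{2, 4, \ldots, 2d\}$ and $S = \{2, 4\}$, so that $s = 2$ lies in $[2, d-2]$. For any $F \in \binom{[2d]}{d}$ with $F \cap A = S$, the complement $F \setminus S$ consists of $d - 2$ odd numbers, so $\sum_{i \in F} i$ has parity $d - 2$, which is odd because $d$ is odd. Hence $\sum_{i \in F} i \notin X$, giving $S \notin \ff(X)|_A$ and contradicting the shattering property at $s = 2$. The only non-routine step in the whole argument is the single-parity subcase of sufficiency, but it reduces cleanly to the elementary fact that $m$-element subsets of $\{0, 1, \ldots, d-1\}$ attain every residue modulo $d$ when $2 \le m \le d-2$.
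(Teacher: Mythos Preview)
Your proof is correct and follows essentially the same approach as the paper: both directions reduce to the restricted sumset $\sum\binom{\bar A}{d-s}$, invoke Corollary~\ref{addcom} plus pigeonhole in the mixed-parity case, and treat the single-parity $\bar A$ separately (you actually justify this case via the swapping argument, whereas the paper simply asserts that the sumset is a full parity class). The only minor point is that your ``by symmetry'' in the necessity direction is not a literal symmetry of the problem; for $X$ all odd you should instead take $A$ to be the odd elements (keeping $|S|=2$) or keep $A$ even and use $|S|=3$.
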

\begin{proof}
If $X$ contains only even elements then it is not difficult to see that the $\ff(X)|_A,$ where $A$ is the set of all odd elements, misses all projections of odd size. Similarly, if $X$ contains only odd elements then $\ff(X)|_A,$ where $A$ is the set of all even elements, misses all projections of even size. Thus, condition 2 from the theorem is necessary.

Conversely, take any such $A$ and a particular subset $A'\subset A$ of size $s$. We need to show that it is possible to complement it with $d-s$ elements in $\bar A$ so that the sum of all elements in the resulting set belongs to $X$ modulo $2d.$
% \textcolor{red}{Need to show that, unless $\bar A = \{2,4,6,\ldots, 2n\}$, for every $2\le s\le n-2,$ we have $|\sum{\bar A\choose s}|>n,$ where $\sum{Y\choose s}$ is the set of all different sums modulo $2n$ of $s$ distinct elements from $Y$.} 
% \textcolor{blue}{Not true, take $n=4$ and $A=\{0,1,4,5\}$}.
% \begin{proof}[Proof sketch]It is enough to show it for $s=2,3$ (and $s=n-2,n-3$). Once we have at least $n+1$ different sums for $s$, we can extend it to $n+1$ different for $s+2$, because there is an $l$ which is the sum of at least $(n-1)/4$ pairs. (For large $s$ use $\sim n^2/12$ $3$-tuples with the same sum.)
% \end{proof}
Applying Corollary~\ref{addcom}, we have $\Big(\sum_{a\in A'}a+ \sum\binom{\bar A}{d-s}\Big)\cap X \ne \emptyset$ (mod $2d$) by the pigeon-hole principle for any $\bar A$ containing both odd and even elements. In case when $\bar A$ is the set of all even or all odd elements, $\sum\binom{\bar A}{d-s}$ contains either all even or all odd elements. In any case,  condition 2 from the theorem implies that the aforementioned intersection is non-empty as well. %in the exceptional case when $\bar A$ the form of an arithmetic progression.
\end{proof}
\begin{lemma}\label{projection1}
 The family $\ff(X)|_A$ contains 
all the sets of size $1$ and $d-1$ for any $A\in {[2d]\choose d}$ if and only if condition 3 from Theorem~\ref{thmnew1} holds.
\end{lemma}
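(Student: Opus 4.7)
The plan is to translate ``$\{a\}\notin\ff(X)|_A$'' and ``$A\setminus\{a\}\notin\ff(X)|_A$'' into explicit equations relating $X$ and $A$ modulo $2d$, and then derive the equivalence with condition~3 via a one-line computation of $\sum_{x\in X}x \pmod d$.

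Let $s_A := \sum_{i\in A}i$ and $s_{\bar A} := \sum_{i\in \bar A}i$, both taken modulo $2d$. Every $F\in\binom{[2d]}{d}$ with $F\cap A=\{a\}$ has the form $\{a\}\cup(\bar A\setminus\{t\})$ for some $t\in \bar A$, and its sum is $a+s_{\bar A}-t$. Hence $\{a\}\in\ff(X)|_A$ iff $\bar A\cap((a+s_{\bar A})-X)\neq\emptyset$, and since $|X|=|\bar A|=d$ in $\Z_{2d}$, this fails precisely when $(a+s_{\bar A})-X = A$, i.e.,
\[
X=(a+s_{\bar A})-A \pmod{2d}.
\]
The symmetric argument gives $A\setminus\{a\}\notin\ff(X)|_A$ iff $X = A+(s_A-a)\pmod{2d}$.

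For the ``if'' direction, argue contrapositively and suppose some $\{a\}$ is missed. Since $a\in A$, the element $u:=s_{\bar A}=(a+s_{\bar A})-a$ lies in $X$. Writing $\sigma:=\sum_{x\in X}x = d(a+s_{\bar A})-s_A$ we get
\[
\sigma-u \equiv -s_A-s_{\bar A} \equiv 0 \pmod d,
\]
using $s_A+s_{\bar A}=\sum_{i=1}^{2d}i \equiv 0 \pmod d$; this violates condition~3. The $(d-1)$-set case is identical, taking $u:=s_A\in X$.

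For the converse, suppose condition~3 fails, so some $u\in X$ has $\sigma-u\in\{0,d\}\pmod{2d}$. Choose $t\in\Z_{2d}$ of the parity making $d(t-1)\equiv\sigma-u\pmod{2d}$, namely $t$ odd when $\sigma-u\equiv 0$ and $t$ even when $\sigma-u\equiv d$. Set $A:=t-X$ and $a:=t-u$; then $|A|=d$ and $a\in A$ because $u\in X$. A direct computation using $s_A=dt-\sigma$ and $s_{\bar A}=d+\sigma-dt$ gives $a+s_{\bar A}=t$, whence $X=t-A=(a+s_{\bar A})-A$, so $\{a\}$ is missed in $\ff(X)|_A$. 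The main obstacle is this parity matching in the construction $A=t-X$; once that is arranged, the forward direction is a one-line calculation and the converse also collapses to straightforward verification.
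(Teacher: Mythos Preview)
Your proof is correct and follows essentially the same approach as the paper: both reduce ``trace missing'' to an affine equation $X=A+c$ (or $X=c-A$) in $\Z_{2d}$ and then sum the elements to force $\sum_{x\in X\setminus\{u\}}x\equiv 0\pmod d$. The only cosmetic difference is in the converse: the paper exhibits a missing $(d-1)$-trace by taking $A=X$ when $\sigma-u\equiv 0$ and $A=d+X$ when $\sigma-u\equiv d$, whereas you exhibit a missing singleton trace via $A=t-X$ with the parity of $t$ chosen to match; these are dual and the arithmetic is identical.
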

\begin{proof} 
If $u\in X$ is such that $\sum_{w\in X\setminus \{u\}} w= 0$ (mod $2d$) then $\ff(X)|_X$ does not contain $X\setminus \{u\}.$ Indeed, we need an element $y$ from $\bar X$ to complement $X\setminus \{u\}$ to obtain a set with sum in $X$. But then $y\in X\cap \bar X.$ If $u\in X$ is such that $\sum_{w\in X\setminus \{u\}} w= d$ (mod $2d$), then consider the sets $A = d+X$ and $A' =  A\setminus \{u+d\}$. We have $\sum_{a\in A'} a = (d-1)d+\sum_{x\in X\setminus \{u\}} x = d$ (mod $2d$), because $(d-1)d = 0$ (mod $2d$) (here we use the fact that $d$ is odd). We claim that there is no $F\in \ff(X)$ such that $F\cap A = A'.$ Indeed, if there is $b\in \bar A$ such that $\sum_{y\in A'\cup \{b\}}y\in X$ then $b\in X-\sum_{y\in A'} y = X-d = A$, a contradiction. Thus, condition 3 from the theorem is necessary.\\

Conversely, fix some $A$. Let us first show that, for any $A'\subset A$ of size $d-1$, there is $b\in B:=\bar A$ such that $A'\cup \{b\}\in \ff(X)$. 

Assume that this is not the case. Then the set $B+\sum_{a\in A'} a\subset Y:=\bar X$ (mod $2d$), and, given that $|B| = |Y| = d$, we have 
$$B+\sum_{a\in A'} a = Y.$$ 
Let us put $z:= \sum_{a\in A'} a$ (mod $2d$). Then, since $B\sqcup A = X\sqcup Y$, we have $A+z = X$. Put $X':=\{a'+z:a'\in A'\}$. We get that $z = \sum_{a\in A'} a = \sum_{x\in X'} x-z|X'|$. Rewriting this, we have $\sum_{x\in X'} x = d z,$ and thus $\sum_{x\in X'} x = 0\ ({\rm mod\ } d)$. This contradicts condition 3 from the theorem.

%\todo{another way to finish that part:} Let us put $z:= \sum_{a\in A'} a$. Then, since $B\sqcup A = X\sqcup Y$, we have $A+z = X$. Put $A\setminus A' = \{a'\}$ and note that $a'+z = \sum_{a\in A} a \in X$. Then, summing elements on both sides of the equality $A+z = X$ modulo $d$, we have $\sum_{a\in A} + |A|z = \sum_{a\in A} a = \sum_{x\in X} x$, which contradicts condition 3 from the theorem.

\vskip+0.1cm

The case when $A' = \{a\}$ is very similar. We show that for any $a$ there is $b\in B$ such that $\{a\}\cup B\setminus \{b\}\in \ff(X)$. 

Assume that this is not the case. Then 
$$\Big(a+\sum_{b\in B} b\Big) - B=Y.$$ 
Let us put $z:= a+\sum_{b\in B} b$. Then, since $B\sqcup A = X\sqcup Y$, we have $z-A = X$. Put $x':=z-a$ and note that $x'\in X$. 
We get that $z = a+\sum_{b\in B} b = z-x'+dz-\sum_{y\in Y} y$. Recall that (cf. Lemma~\ref{lem111}) $\sum_{y\in Y}y+\sum_{x\in X} x = d$. Using the last two equations, we again get $\sum_{x\in X\setminus \{x'\}} x = 0\ ({\rm mod\ } d)$. This contradicts condition 3 from the theorem.
\end{proof}
This concludes the proof of Theorem~\ref{thmnew1}.\\

\subsection{Even $d$}\label{sec32}
The first part of the argument in this case follows essentially the same steps as the argument in the case of odd $d$. Fix an even integer $d$ and consider a set $X\subset [2d]$ of size $d-1$. Define 
\[\mathcal{F}_1(X)=\Big\{F\in {[2d]\choose d}: 2d\in F \textrm{ and }\sum_{i\in F} i \in X\cup \left \{\frac{d}{2}\right \}{\rm \ (mod\ }2d) \Big\}.\]
\[\mathcal{F}_2(X)=\Big\{F\in {[2d]\choose d}: 2d\notin F \textrm{ and }\sum_{i\in F} i \in X\cup \left \{\frac{3d}{2}\right \}{\rm \ (mod\ }2d) \Big\}.\]

\begin{prop}\label{almostshattered} Let $d=2k$. For $X\subset [2d]\setminus \{\frac{d}{2},\frac{3d}{2}\}$ the family $\mathcal{F}:=\mathcal{F}_1(X)\cup \mathcal{F}_2(X)$ almost shatters every $S\in \binom{[2d]}{d}$ with $2d\in S$ if the following three conditions hold.
\begin{enumerate}
    \item $|X|=d-1$ and $X\cap (d-X)=\emptyset$ (mod $2d$);
    \item $X$ contains both odd and even elements. %For any $A\in \binom{[2d]}{d}$ we have $\sum\binom{A}{s}\cap X \neq \emptyset$ for any $2\leq s \leq d-2$;
    \item For $X_1=X\cup \{\frac{d}{2}\}$ and for every $u\in X_1$ we have $\sum_{w\in X_1\setminus \{u\}}w\neq 0\ {\rm (mod\ }d)$. Equivalently, for $X_1=X\cup \{\frac{3d}{2}\}$ and for every $u\in X_1$ we have $\sum_{w\in X_1\setminus \{u\}}w\neq 0\ {\rm (mod\ }d)$. 
\end{enumerate}
\end{prop}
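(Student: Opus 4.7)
The plan is to mirror the proof of Theorem~\ref{thmnew1}, checking the almost-shattering property on a given $S\in\binom{[2d]}{d}$ with $2d\in S$ size-by-size. The key difference is that $\mathcal{F}=\mathcal{F}_1(X)\sqcup\mathcal{F}_2(X)$ splits according to whether $2d\in F$, and the size-$d$ set $X$ from the odd case is replaced by two augmented size-$d$ sets $X_1:=X\cup\{d/2\}$ (used by $\mathcal{F}_1$) and $X_2:=X\cup\{3d/2\}$ (used by $\mathcal{F}_2$). For a subset $S'\subseteq S$, I look into $\mathcal{F}_1$ when $2d\in S'$ (needing $\sum F\in X_1$) and into $\mathcal{F}_2$ when $2d\in S\setminus S'$ (needing $\sum F\in X_2$); this pairing sidesteps the identity $(d-1)d\equiv 0\pmod{2d}$ that required $d$ odd in Theorem~\ref{thmnew1}.

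The analogue of Lemma~\ref{fullempty} is the observation that $X_1$ and $(d-X)\cup\{3d/2\}$ partition $\mathbb{Z}_{2d}$: condition~(1) together with $d/2,3d/2\notin X$ gives disjointness, and their sizes add to $2d$. Using $\sum_{i=1}^{2d}i\equiv d\pmod{2d}$, we have $S\in\mathcal{F}_1$ iff $\sum S\in X_1$, and $\bar S\in\mathcal{F}_2$ iff $\sum S\in(d-X)\cup\{3d/2\}$, so exactly one of $\emptyset,S$ lies in $\mathcal{F}|_S$.

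For intermediate sizes $2\le s\le d-2$, Corollary~\ref{addcom} applied to $\bar S$ (of size $d$) gives $|\sum\binom{\bar S}{d-s}|>d$, so the translate $\sum S'+\sum\binom{\bar S}{d-s}$ has more than $d$ elements and cannot lie inside the size-$d$ set $\mathbb{Z}_{2d}\setminus X_1$ (or $\setminus X_2$), yielding the required $T\subseteq\bar S$. The only $\bar S$ (given $2d\in S$) that fails the both-parity hypothesis is $\bar S=\{1,3,\dots,2d-1\}$; here a simple swap argument shows $\sum\binom{\bar S}{d-s}$ forms an arithmetic progression of step~$2$ and length $s(d-s)+1\ge d$ in $\mathbb{Z}_{2d}$, hence coincides with the full coset of $2\mathbb{Z}_{2d}$ of parity $d-s\bmod 2$, and condition~(2) guarantees that $X_1,X_2$ each meet both parity classes. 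For the extreme sizes $|S'|\in\{1,d-1\}$ I would adapt Lemma~\ref{projection1}: for instance, if $|S'|=d-1$, $2d\in S'$, and no $b\in\bar S$ gives $\sum S'+b\in X_1$, then $\sum S'+\bar S=\mathbb{Z}_{2d}\setminus X_1$, forcing $S+z=X_1$ with $z:=\sum S'$, whence $X'_1:=S'+z\subset X_1$ satisfies $\sum X'_1=\sum S'+(d-1)z=dz\equiv 0\pmod d$, contradicting condition~(3). The three remaining sub-cases are analogous, with $X_2$ in place of $X_1$ when the witnessing set must lie in $\mathcal{F}_2$.

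The main technical hurdle is bookkeeping: keeping straight which of $X_1,X_2$ is the relevant target and which subfamily is active in each sub-case. The only place the proof genuinely deviates from the odd-$d$ blueprint is the parity-degenerate instance $\bar S=\{1,3,\dots,2d-1\}$ in the middle-size step, where Corollary~\ref{addcom} does not apply directly. Arithmetically the even-$d$ argument is in fact slightly cleaner, because the crucial identity for the extreme sizes, $(d-1)z+z=dz\equiv 0\pmod d$, imposes no parity constraint on $d$.
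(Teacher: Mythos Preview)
Your proposal is correct and follows essentially the same approach as the paper: the three claims (Claim~\ref{fullempty2}, Claim~\ref{size2}, Claim~\ref{size1}) correspond precisely to your second and third paragraphs, with the same case split according to whether $2d\in S'$ and the same appeal to Corollary~\ref{addcom}. Your treatment of the degenerate case $\bar S=\{1,3,\dots,2d-1\}$ via the explicit interval of sums is slightly more detailed than the paper's one-line remark, but the content is identical.
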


The following three claims imply the proposition.

\begin{cla}\label{fullempty2}For every $S\in \binom{[2d]}{d}$ exactly one of $S, \overline{S}$ belongs to $\mathcal{F}$.
\end{cla}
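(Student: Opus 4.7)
The plan is to reduce the claim to a counting statement about residues modulo $2d$. Writing $s(S) := \sum_{i \in S} i \pmod{2d}$ and using $\sum_{i=1}^{2d} i = d(2d+1) \equiv d \pmod{2d}$, we have $s(\bar S) \equiv d - s(S) \pmod{2d}$ for every $d$-set $S$. Since $2d$ belongs to exactly one of $S, \bar S$, we may assume without loss of generality that $2d \in S$, so $2d \in \bar S$ is impossible and the roles of $\mathcal{F}_1$ and $\mathcal{F}_2$ are determined: $S \in \mathcal{F}$ iff $S \in \mathcal{F}_1(X)$ and $\bar S \in \mathcal{F}$ iff $\bar S \in \mathcal{F}_2(X)$.

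First I would translate the two conditions into conditions on $s := s(S)$:
\[
S \in \mathcal{F} \iff s \in X \cup \{d/2\}, \qquad \bar S \in \mathcal{F} \iff d-s \in X \cup \{3d/2\} \iff s \in (d-X)\cup\{3d/2\},
\]
where I use that $d - 3d/2 \equiv -d/2 \equiv 3d/2 \pmod{2d}$. So it is enough to prove that the two sets $Y_1 := X \cup \{d/2\}$ and $Y_2 := (d - X) \cup \{3d/2\}$ partition $\mathbb{Z}_{2d}$.

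The main (and really only) step is this partition claim, which I would check by counting and disjointness. By hypothesis, $|X| = d-1$ and $X \subset [2d] \setminus \{d/2, 3d/2\}$, so $X \cap \{d/2\} = \emptyset$ and hence $|Y_1| = d$. Since the map $x \mapsto d - x$ is a bijection of $\mathbb{Z}_{2d}$, also $|d-X| = d-1$; moreover $d/2, 3d/2 \notin d - X$ because $d - d/2 = d/2 \notin X$ and $d - 3d/2 \equiv 3d/2 \notin X$. Thus $|Y_2| = d$. For disjointness, condition~1 gives $X \cap (d-X) = \emptyset$, while $d/2 \ne 3d/2$ (as $d \ne 0$ in $\mathbb{Z}_{2d}$) and the two distinguished residues avoid both $X$ and $d - X$ by the observations above. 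Hence $Y_1 \cap Y_2 = \emptyset$ and $|Y_1| + |Y_2| = 2d = |\mathbb{Z}_{2d}|$, giving the desired partition.

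Since every residue $s$ lies in exactly one of $Y_1, Y_2$, exactly one of $S, \bar S$ is in $\mathcal{F}$, which is the claim. I do not anticipate any genuine obstacle here: the argument is purely a bookkeeping exercise with the self-paired residues $d/2$ and $3d/2$ under the involution $s \mapsto d - s$, and the only place the hypotheses are used is to guarantee $|X| = d-1$, $X \cap (d-X) = \emptyset$, and $\{d/2, 3d/2\} \cap X = \emptyset$.
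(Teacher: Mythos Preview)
Your proof is correct and follows essentially the same idea as the paper: both exploit that $s(\bar S)\equiv d-s(S)$ and that the involution $s\mapsto d-s$ on $\mathbb{Z}_{2d}$ fixes exactly $d/2$ and $3d/2$, with $X$ selecting one residue from each of the remaining $d-1$ orbits. The paper presents this as a short case split (either $s(S)\in X$, handled by condition~1, or $s(S)\in\{d/2,3d/2\}$, where $s(S)=s(\bar S)$ and the $2d$-membership breaks the tie), while you package the same content as the explicit partition $\mathbb{Z}_{2d}=Y_1\sqcup Y_2$; your version is a bit more self-contained but not genuinely different.
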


\begin{proof}
If $\sum_{i\in S}i\in X$ it follows from the first condition as in Lemma \ref{fullempty}. If $\sum_{i\in S}i=\frac{d}{2}$ (mod $2d$) or $\sum_{i\in S}i=\frac{3d}{2}$ (mod $2d$), then $\sum_{i\in S}i=\sum_{i\in \overline{S}}i$ (mod $2d$), thus $S\in \mathcal{F}$ and $\overline{S}\notin \mathcal{F}$.
\end{proof}

\begin{cla}\label{size2} For any $S\in \binom{[2d]}{d}$ with $2d\in S$, every subset of size $2\leq s \leq d-2$ of $S$ appears in $\mathcal{F}|_S$.
\end{cla}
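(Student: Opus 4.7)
The plan is to mimic the sumset-based argument of Lemma~\ref{projection1}. Let $S' \subseteq S$ with $|S'|=s$ and $2 \le s \le d-2$. Any $F$ with $F\cap S = S'$ has the form $F = S'\cup T$ for some $T \subseteq \overline{S}$ with $|T|=d-s$. Since $2d \in S$, we have $2d \notin \overline{S}$, hence $2d\notin T$, and so $2d \in F$ iff $2d \in S'$. Accordingly, if $2d\in S'$ we must find $F \in \mathcal{F}_1(X)$, which reduces to placing $\sum_{t\in T} t$ in the target $(X \cup \{d/2\}) - \sum_{i\in S'} i \pmod{2d}$; if $2d\notin S'$ we must find $F\in \mathcal{F}_2(X)$, placing $\sum_{t\in T} t$ in $(X \cup \{3d/2\}) - \sum_{i\in S'} i \pmod{2d}$. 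Each target has size $d$ in $\mathbb{Z}_{2d}$, so the claim reduces to showing that $\sum \binom{\overline{S}}{d-s}$ meets this target.

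In the generic case where $\overline{S}$ contains both odd and even elements, Corollary~\ref{addcom} yields $\bigl|\sum \binom{\overline{S}}{d-s}\bigr| > d$. Since the ambient group has size $2d$ and the target has size $d$, pigeonhole immediately forces a common element.

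The main obstacle is the degenerate case where $\overline{S}$ is monochromatic in parity. Since $2d\notin \overline{S}\subseteq [2d-1]$ and $[2d-1]$ contains only $d-1$ even numbers, $\overline{S}$ cannot be all even; hence $\overline{S} = \{1,3,\dots,2d-1\}$ and $S = \{2,4,\dots,2d\}$. I would handle this directly: sums of $d-s$ elements of $\overline{S}$ are integers of parity $d-s \pmod 2$ forming an arithmetic progression with common difference $2$ between $(d-s)^2$ and $(d-s)(d+s)$, giving $s(d-s)+1$ values, which is at least $d$ for $2 \le s \le d-2$. Reducing mod $2d$, $d$ consecutive same-parity integers exhaust the whole parity class of size $d$ in $\mathbb{Z}_{2d}$. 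As every element of $S'$ is even, $\sum_{i\in S'} i$ is even, so the shifted sumset still covers an entire parity class. Condition~2 of Proposition~\ref{almostshattered} guarantees $X$ meets both parity classes, whence $X\cup \{d/2\}$ and $X\cup\{3d/2\}$ do as well, so the target intersects the shifted sumset. Combining the two subcases completes the proof.
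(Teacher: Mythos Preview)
Your argument is correct and follows essentially the same route as the paper's: split on whether $2d\in S'$ to decide whether the target set is $X\cup\{d/2\}$ or $X\cup\{3d/2\}$, apply Corollary~\ref{addcom} when $\overline S$ contains both parities, and use Condition~2 to handle the parity-monochromatic case. The only differences are cosmetic: the paper treats both possible monochromatic cases for $\overline S$ in one sentence, whereas you correctly observe that since $2d\in S$ the complement $\overline S$ cannot consist entirely of even elements, leaving only $\overline S=\{1,3,\dots,2d-1\}$; and you spell out explicitly (via the arithmetic-progression count $s(d-s)+1\ge d$) why the restricted sumset covers an entire parity class of $\mathbb Z_{2d}$, which the paper simply asserts.
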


\begin{proof} Let $S'\subset S$ be a subset of size $s$. We consider two cases.

\bf{Case 1: } \rm $2d\in S'$. In this case we have to show that it is possible to complement it with $d-s$ elements in $\overline{S}$ such that the sum of all elements in the resulting set belongs to $X\cup \{\frac{d}{2}\}$. If $\overline{S}$ is the set of all even or all odd elements, $\sum \binom{\overline{S}}{d-s}$ contains all even or all odd elements. Otherwise, Corollary \ref{addcom} implies $|\sum \binom{\overline{S}}{d-s}|>d$, and we are done since $|X\cup \{\frac{d}{2}\}|=d$.

\bf{Case 2: } \rm $2d\notin S'$. In this case we have to show that it is possible to complement it with $d-s$ elements in $\overline{S}$ such that the sum of all elements in the resulting set belongs to $X\cup \{\frac{3d}{2}\}$. This can be done in the same way as we handled Case 1.
\end{proof}

\begin{cla}\label{size1}
For any $S\in \binom{[2d]}{d}$ with $2d\in S$ we have $\binom{S}{1}\cup \binom{S}{d-1}\subseteq \mathcal{F}|_S$.
\end{cla}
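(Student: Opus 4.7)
My plan is to parallel the proof of Lemma~\ref{projection1}, with added bookkeeping to track whether the desired $F$ lies in $\mathcal{F}_1(X)$ or $\mathcal{F}_2(X)$: the membership of $2d$ in the prescribed trace $F \cap S$ determines whether $2d \in F$, and hence which of the two ``allowed sum'' sets the $F$-sum must land in. Write $X_1^{(1)} := X \cup \{d/2\}$ for $F \in \mathcal{F}_1(X)$ and $X_1^{(2)} := X \cup \{3d/2\}$ for $F \in \mathcal{F}_2(X)$. In every subcase I will assume no valid $F$ exists and deduce that some element $v$ of the corresponding $X_1^{(j)}$ satisfies $v \equiv \sum X_1^{(j)} \pmod d$, contradicting condition 3. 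Either form of condition 3 is then available because $X_1^{(1)} \equiv X_1^{(2)} \pmod d$.

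For a $(d-1)$-subset $A' = S \setminus \{a_0\}$ of $S$, I will set $z := \sum_{a \in A'} a$; any valid $F$ has the form $A' \cup \{b\}$ with $b \in \overline S$, and we need $b + z \in X_1^{(j)}$, where $j = 1$ if $a_0 \ne 2d$ and $j = 2$ if $a_0 = 2d$. Assuming no such $b$ exists, a cardinality count (both sides have size $d$) forces $\overline S + z = [2d] \setminus X_1^{(j)}$; invoking condition 1, the involution $x \mapsto d - x$ pairs $X$ with $d - X$ and fixes $\{d/2, 3d/2\}$, so this complement equals $(d - X) \cup \{3d/2\}$ or $(d - X) \cup \{d/2\}$ accordingly. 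Taking complements in $[2d]$ yields $S + z = X_1^{(j)}$, and summing modulo $d$ gives $\sum S \equiv \sum X + d/2 \equiv \sum X_1^{(j)} \pmod d$. Since $\sum S = z + a_0$, the element $v := z + a_0 \in S + z = X_1^{(j)}$ satisfies $v \equiv \sum X_1^{(j)} \pmod d$, contradicting condition 3.

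The singleton case will follow a dual computation. For $\{a\} \subset S$, set $w := \sum_{b \in \overline S} b$; a valid $F$ is $\{a\} \cup (\overline S \setminus \{b\})$, requiring $a + w - b \in X_1^{(j)}$, with $j = 1$ when $a = 2d$ and $j = 2$ otherwise. If no $b$ works, an analogous count gives $(a + w) - \overline S = [2d] \setminus X_1^{(j)}$, i.e.\ $\overline S = \{(a + w) - d + x : x \in X\} \cup \{(a + w) - u^*\}$, where $u^* \in \{d/2, 3d/2\}$ is the fixed point of the involution inherited from the subcase. The constraint $a \notin \overline S$ (equivalently $2d \notin \overline S$ when $a = 2d$) then excludes $w$ from $(d - X) \cup \{d/2\}$ or $(d - X) \cup \{3d/2\}$, forcing $w \in X_1^{(j)}$. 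Summing the displayed identity for $\overline S$ modulo $d$ (the terms $d(a+w)$ and $d^2$ vanish) yields $w \equiv \sum X + d/2 \equiv \sum X_1^{(j)} \pmod d$, once again contradicting condition 3.

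The only delicate point will be matching the correct shift set $X_1^{(j)}$ to each of the four subcases, and checking that the two formulations of condition 3 jointly cover them; this is painless because $d/2 \equiv 3d/2 \pmod d$. After that matching is fixed, the proof rests on exactly the two ingredients already used in Lemma~\ref{projection1}: the involution $x \mapsto d - x$ describes $[2d] \setminus X$, and reducing set-level identities modulo $d$ kills the $dz$ term and collapses $d/2$ with $3d/2$, producing precisely the residue forbidden by condition 3.
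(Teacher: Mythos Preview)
Your proposal is correct and follows essentially the same approach as the paper's proof: the four subcases you describe match the paper's Types~1--4, and in each one you reach the same set identity ($S+z=X_1^{(j)}$, respectively $(a+w)-S=X_1^{(j)}$) and extract the same element of $X_1^{(j)}$ whose removal leaves a sum divisible by $d$, contradicting condition~3. The only cosmetic difference is that you explicitly invoke the involution $x\mapsto d-x$ to describe $[2d]\setminus X_1^{(j)}$, whereas the paper works directly with $Y=\overline{X_1}$; the algebra is otherwise identical (your $w$ is the paper's $x'$ in the singleton case).
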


\begin{proof}
 There are $4$ types of sets to consider.

\bf{Type 1:} \rm $S'\in \binom{S}{d-1}$ with $2d\in S'$. To prove that $S'$ belongs to $\mathcal{F}|_S$, we have to show that there is a $b\in \overline{S}$ such that $S'\cup \{b\}\in \mathcal{F}_1(X)$. 

Assume that this is not the case. Let $X_1=X\cup \{\frac{d}{2}\}$ and $\overline{S}+\sum_{s\in S'} s\subset Y:=\bar X_1$ (mod $2d$). Given that $|\overline{S}| = |Y| = d$, we have 
$$\overline{S}+\sum_{s\in S'} s = Y.$$ 
Let us put $z:= \sum_{s\in S'} s$ (mod $2d$). Then, since $\overline{S}\sqcup S = X_1\sqcup Y$, we have $S+z = X_1$. Put $X':=\{s'+z:s'\in S'\}$. We get that $z = \sum_{s\in S'} s = \sum_{x\in X'} x-z|X'|$. Rewriting this, we have $\sum_{x\in X'} x = d z,$ and thus $\sum_{x\in X'} x = 0\ ({\rm mod\ } d)$. This contradicts condition 3 from the proposition.

\bf{Type 2: } \rm  $S'\in \binom{S}{d-1}$ with $2d\notin S'$.  To prove that $S'$ belongs to $\mathcal{F}|_S$, we have to show that there is a $b\in \overline{S}$ such that $S'\cup \{b\}\in \mathcal{F}_2(X)$. We can proceed in the same way as in the case of Type 1, with letting $X_1=X\cup \{\frac{3d}{2}\}$.

\bf{Type 3: } \rm  $S'=\{a\}$ with $a\neq2d$. To prove that $S'$ belongs to $\mathcal{F}|_S$, we have to show that there is a $b\in \overline{S}$ such that $\overline{S}\setminus \{b\}\cup \{a\}\in \mathcal{F}_2(X)$.

Assume that this is not the case and let $X_1=X\cup \{\frac{3d}{2}\}$. Then 
$$\Big(a+\sum_{b\in \overline{S}} b\Big) - \overline{S}=\overline{X}_1.$$ 
Let us put $z:= a+\sum_{b\in \overline{S}} b$. Then, since $\overline{S}\sqcup S = X_1\sqcup \overline{X}_1$, we have $z-S = X_1$. Put $x':=z-a$ and note that $x'\in X_1$. 
We get that $z = a+\sum_{b\in \overline{S}} b = z-x'+dz-\sum_{x\in \overline{X}_1} x$. Recall that $\sum_{x\in X_1}x+\sum_{x\in \overline{X}_1} x = d\ ({\rm mod\ } 2d)$. Using the last two equations, we again get $\sum_{x\in X_1\setminus \{x'\}} x = 0\ ({\rm mod\ } d)$. This contradicts condition 3 from the proposition.

\bf{Type 4: } \rm  $S'=\{a\}$ with $a=2d$. To prove that $S'$ belongs to $\mathcal{F}|_S$, we have to show that there is a $b\in \overline{S}$ such that $\overline{S}\setminus \{b\}\cup \{a\}\in \mathcal{F}_1(X)$. We can proceed in the same way as in the case of Type 3, with letting $X_1=X\cup \{\frac{d}{2}\}$.
\end{proof}

\begin{prop}\label{othersets} If $X$ is as in Proposition \ref{almostshattered} and $A\cup \mathcal{F}_1(X)\cup \mathcal{F}_2(X)$ has VC-dimension $(d-1)$ for some $A\subset [2d]$ then $2d\in S$ if $|A|<d$ and $2d\notin A$ if $|A|>d$.
\end{prop}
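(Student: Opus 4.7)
The plan is to prove the contrapositive in two symmetric halves: if $|A|<d$ and $2d\notin A$, then $\mathcal F\cup\{A\}$ shatters some $d$-set (so the VC-dimension is at least $d$), and dually if $|A|>d$ and $2d\in A$. I will sketch the first case in detail; the second follows by the same recipe.

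In the first case, Proposition~\ref{almostshattered} combined with Claim~\ref{fullempty2} tells us that for every $d$-set $S$ containing $2d$, $\mathcal F|_S=2^S\setminus\{\emptyset\}$ precisely when $S\in\mathcal F_1(X)$ (the other almost-shattering pattern, $2^S\setminus\{S\}$, occurs when $\bar S\in\mathcal F$). Since $|A\cap S|\le|A|<d$, the trace $A\cap S$ cannot equal $S$, so the only productive outcome is $A\cap S=\emptyset$, which supplies the missing $\emptyset$ and shatters $S$. Writing $S=\{2d\}\cup S'$ with $S'\subset R:=[2d-1]\setminus A$ and $|S'|=d-1$, the condition $S\in\mathcal F_1(X)$ becomes $\sum S'\in X\cup\{d/2\}\pmod{2d}$. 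Applying the Girard--Griffiths--Hamidoune theorem to the restricted sumset $\sum\binom{R}{d-1}$ in $\mathbb Z_{2d}$ yields $|\sum\binom{R}{d-1}|\ge|R|=2d-1-|A|$, with strict inequality unless $R$ is a coset of a subgroup of $\mathbb Z_{2d}$. Since $|X\cup\{d/2\}|=d$ and $|R|+d>2d$ whenever $|A|\le d-2$, pigeonhole forces the intersection and produces the required $S$. The dual case $|A|>d$, $2d\in A$ proceeds symmetrically: we seek $S=\{2d\}\cup S_0\subset A$ with $\bar S\in\mathcal F_2(X)$, so that $\mathcal F|_S=2^S\setminus\{S\}$ and $A\cap S=S$; this reduces to finding $S_0\in\binom{A\setminus\{2d\}}{d-1}$ with $\sum S_0\in(d-X)\cup\{3d/2\}\pmod{2d}$, again closed by pigeonhole when $|A|\ge d+2$.

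The main obstacle is the pair of boundary cases $|A|=d-1$ and $|A|=d+1$, where both the sumset and the target have size exactly $d$ and pigeonhole just barely fails. The Girard--Griffiths--Hamidoune theorem narrows the possible failure to a single configuration in each case: $R$ (resp.\ $A\setminus\{2d\}$) must be a coset of the unique subgroup of $\mathbb Z_{2d}$ of order $d$, and the only such coset contained in $[2d-1]$ is $\{1,3,\ldots,2d-1\}$. This pins down the exceptional $A$'s to $A=\{2,4,\ldots,2d-2\}$ (first half) and $A=\{1,3,\ldots,2d-1\}\cup\{2d\}$ (second half). For these specific $A$'s my plan is to switch to an alternative shattered $d$-set that does not contain $2d$---namely $S:=[2d-1]\setminus A$ in the first sub-case, and its symmetric analogue in the second---and verify directly that $\mathcal F|_S$ has the required almost-shattering form. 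The verification requires rerunning the sumset arguments of Claims~\ref{size2} and~\ref{size1} for a trace set not containing $2d$: one now draws from both $\mathcal F_1(X)$ and $\mathcal F_2(X)$ depending on whether the element $2d$ is used in the chosen $F\in\mathcal F$, and one invokes condition~3 on $X$ to exclude the degenerate coincidences that could otherwise sabotage the shattering. Handling this boundary situation cleanly is the most delicate portion of the proof; the generic case reduces cleanly to Girard--Griffiths--Hamidoune plus pigeonhole.
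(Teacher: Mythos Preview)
Your approach is substantially more laborious than the paper's, and it has a real gap at the boundary case $|A|=d-1$.

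The paper's proof is a four-line argument that avoids sumset bounds entirely. Assuming $|A|<d$ and $2d\notin A$, pick any $d$-set $A'\supseteq A\cup\{2d\}$. By Proposition~\ref{almostshattered}, $A'$ is almost shattered by $\mathcal F$, so in particular the singleton $\{2d\}$ appears as a trace: there is $S\in\mathcal F$ with $S\cap A'=\{2d\}$. Then $2d\in S$ forces $S\in\mathcal F_1(X)$, and $A\subset A'\setminus\{2d\}$ forces $S\cap A=\emptyset$. Since $S$ itself is almost shattered (it contains $2d$) and $S\in\mathcal F$, the missing trace on $S$ is $\emptyset$, which $A$ now supplies. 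No additive combinatorics, no boundary cases.

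Your route does work for $|A|\le d-2$: there $|R|=2d-1-|A|\ge d+1$, the Girard--Griffiths--Hamidoune bound applies to $s=d-1\le |R|-2$, and pigeonhole with the size-$d$ target $X\cup\{d/2\}$ finishes. But at $|A|=d-1$ you have $|R|=d$ and $s=d-1=|R|-1$, which is \emph{outside} the range $2\le s\le |R|-2$ required by the theorem you invoke. Trivially $\big|\sum\binom{R}{d-1}\big|=|R|=d$ for any $d$-set $R$, so the theorem's strict-inequality clause cannot be used to pin down $R$ as a coset. The failure condition is rather that $(\sum R)-R=\overline{X\cup\{d/2\}}$, i.e.\ $R$ is a specific translate of $\overline{X_1}$, not that $R=\{1,3,\ldots,2d-1\}$. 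Your identification of the exceptional $A$ as $\{2,4,\ldots,2d-2\}$ is therefore unjustified, and the subsequent plan to handle that single $A$ by an alternative shattered set does not cover the actual failure set. The same issue arises symmetrically at $|A|=d+1$.

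You could try to rescue the boundary case by mimicking the calculation in Claim~\ref{size1} (using condition~3 on $X$ to rule out the translate coincidence), but this is not what you wrote, and it is not clear the analogue goes through since the roles of $S$ and $\overline S$ differ from that claim. In any event, the paper's trick---use almost-shattering of $A'\supseteq A\cup\{2d\}$ to produce the trace $\{2d\}$---handles all $|A|<d$ uniformly and is the argument you should adopt.
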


\begin{proof}We only prove the first half of the statement, the second can be done  similarly. Assume $|A|<d$ and $2d\notin A$. Since any $A'$ of size $d$ that contains $A\cup \{2d\}$ is almost shattered by Proposition \ref{almostshattered}, there is an $S\in \mathcal{F}_1(X)$ such that $S\cap A'=\{2d\}$. This $S$ is disjoint with $A$. By Proposition \ref{almostshattered}, $S$ is almost shattered, and since $S\in \mathcal{F}_1(X)$, $A$ cannot be added without increasing the dimension, as it gives the missing empty projection of $S$.
\end{proof}

Now we are ready to find the constructions even $d=2k$ with $d\geq 6$. It is not hard to check that $$X
%=\{2d,2,3,\dots,\frac{d}{2}-1,\frac{d}{2}+1,d+1,d+2,\dots,\frac{3d}{2}-1\}
=\Big\{2d,\frac d2+1\Big\}\cup\Big[2,\frac{d}{2}-1\Big]\cup \Big[d+1,\frac{3d}{2}-1\Big]$$ 
%$$X
%=\{2d,2,3,\dots,\frac{d}{2}-1,\frac{d}{2}+1,d+1,d+2,\dots,\frac{3d}{2}-1\}
%=\left(\{2d\}\cup[\frac{d}{2}+1]\setminus \{1,\frac{d}{2}\}\right )\cup \left ([\frac{3d}{2}-1]\setminus [d]\right )$$
satisfies the conditions in \mbox{Proposition \ref{almostshattered}.}

Take $\mathcal{F}_1(X)\cup \mathcal{F}_2(X)$, which is a maximal intersecting family in $\binom{[2d]}{d}$, and add some other sets to it, until we obtain a saturated family $\mathcal{F}$. Then it follows from Proposition \ref{othersets} that for any $F\in \mathcal{F}$ the set $F\Delta \{2d\}$ is not contained in $\mathcal{F}$. Thus we can duplicate $\{2d\}$, and obtain a construction on any ground set $[n]$ for $n\geq 2d$.

\subsection{$d=4,5$}\label{sec33}
To complete the proof of Theorem \ref{thmmain}, we need to handle the cases $d=4,5$. We provide two constructions which we have found using computer search.

Let $\sigma$ be the cyclic permutation $(1\ 2 \ldots 2d-1)$ and let $\mathcal F \subset \binom{[2d]}{d}$ be a family. Then we put $\mathcal P(\mathcal F) := \{\sigma^i F, \ F\in\mathcal F, i=1,\ldots, 2d-1\}$.
For $d = 4$ we take $$\mathcal F_4 = \big\{\{1, 2, 3, 4\}, \{1, 2, 3, 5\}, \{1, 2, 4, 5\}, \{1, 2, 4, 8\}, \{1, 3, 5, 8\}\big\},$$ and for $d = 5$ we take 
\begin{align*}
\mathcal F_5 = \big\{ 
& \{1, 2, 3, 4, 5\},
\{1, 2, 3, 4, 6\},
\{1, 2, 3, 4, 8\},
\{1, 2, 3, 5, 6\},
\{1, 2, 3, 5, 7\}, \\
& \{1, 2, 3, 5, 8\},
\{1, 2, 3, 6, 8\},
\{1, 2, 3, 6, 10\},
\{1, 2, 4, 5, 8\},
\{1, 2, 4, 5, 10\}, \\
& \{1, 2, 5, 6, 10\},
\{1, 2, 5, 7, 10\},
\{1, 2, 5, 8, 10\},
\{1, 3, 5, 7, 10\}
\big\}.
\end{align*}
We then check using computer that $\mathcal P(\mathcal F_4), \mathcal P(\mathcal F_5)$ are saturated.

Note that $\mathcal P(\mathcal F_4)$, $\mathcal P(\mathcal F_5)$ are intersecting,
and if adding a set $A\in 2^{[2d]}$ to the family increases the VC-dimension, then adding $\sigma^i A$ increases it as well.
% , which allows to check the properties of the example for $d=4$ by hand.
We also note that $\mathcal P(\mathcal F_4)$, $\mathcal P(\mathcal F_5)$ do not have the almost-shattering property.

\section{Acknowledgements} A part of this work was done during the workshop ``Open problems in Combinatorics and Geometry'' in Adygea in October of 2019. We thank Ilya Bogdanov and Yelena Yuditsky for inspiring discussions on the subject. 

The authors acknowledge the financial support from the Ministry of Education and Science of the Russian Federation in the framework of MegaGrant no 075-15-2019-1926. Research of Frankl was partially supported by the National Research, Development, and Innovation Office, NKFIH Grant K119670. Research of Patk\'os was partially supported by the National Research, Development and Innovation Office – NKFIH under the grants SNN 129364 and FK 132060. 
\bibliographystyle{plain}
\bibliography{main}

\begin{thebibliography}{10}

\bibitem{Alo83}
Noga Alon.
\newblock On the density of sets of vectors.
\newblock {\em Discrete Mathematics}, 46(2):199--202, 1983.

\bibitem{AloMorYeh16}
Noga Alon, Shay Moran, and Amir Yehudayoff.
\newblock Sign rank versus {VC} dimension.
\newblock In {\em Conference on Learning Theory}, pages 47--80, 2016.

\bibitem{Baj18}
Béla Bajnok.
\newblock {\em Additive combinatorics: A menu of research problems}.
\newblock CRC Press, 2018.

\bibitem{BalMesWag18}
J{\'o}zsef Balogh, Tam{\'a}s M{\'e}sz{\'a}ros, and Adam~Zsolt Wagner.
\newblock Two results about the hypercube.
\newblock {\em Discrete Applied Mathematics}, 247:322--326, 2018.

\bibitem{BluEhrHau89}
Anselm Blumer, Andrzej Ehrenfeucht, David Haussler, and Manfred~K Warmuth.
\newblock Learnability and the vapnik-chervonenkis dimension.
\newblock {\em Journal of the ACM}, 36:929--965, 1989.

\bibitem{BolLeaRad89}
B{\'e}la Bollob{\'a}s, Imre Leader, and Andrew~J Radcliffe.
\newblock Reverse {K}leitman inequalities.
\newblock {\em Proceedings of the London Mathematical Society}, 3(1):153--168,
  1989.

\bibitem{BolRad95}
B{\'e}la Bollob{\'a}s and Andrew~J Radcliffe.
\newblock Defect {S}auer results.
\newblock {\em Journal of Combinatorial Theory, Series A}, 72:189--208, 1995.

\bibitem{Dud85}
Richard~M Dudley.
\newblock The structure of some {V}apnik-{C}hervonenkis classes.
\newblock In {\em Proceedings of the Berkeley Conference in Honor of Jerzy
  Neyman}, volume~2, pages 495--507, 1985.

\bibitem{Dudl99}
Richard~M Dudley.
\newblock {\em {V}apnik-{Č}ervonenkis Combinatorics}.
\newblock Cambridge Studies in Advanced Mathematics. Cambridge University
  Press, 1999.

\bibitem{ErdHei}
Paul Erd\H{o}s and Hans Heilbronn.
\newblock On the addition of residue classes mod p.
\newblock {\em Acta Arithmetica}, 9:149--159, 1964.

\bibitem{Fra83}
Peter Frankl.
\newblock On the trace of finite sets.
\newblock {\em Journal of Combinatorial Theory, Series A}, 34:41--45, 1983.

\bibitem{Fra89}
Peter Frankl.
\newblock Traces of antichains.
\newblock {\em Graphs and Combinatorics}, 5:295--299, 1989.

\bibitem{FraPac83}
Peter Frankl and J{\'a}nos Pach.
\newblock On the number of sets in a null t-design.
\newblock {\em European Journal of Combinatorics}, 4(1):21--23, 1983.

\bibitem{FurPac91}
Zolt{\'a}n F{\"u}redi and J{\'a}nos Pach.
\newblock Traces of finite sets: extremal problems and geometric applications.
\newblock In {\em Extremal problems for finite sets}, volume~3, pages 255--282.
  J{\'a}nos Bolyai Math. Soc., 1994.

\bibitem{FurQui84}
Zolt{\'a}n F{\"u}redi and Forrest Quinn.
\newblock Traces of finite sets.
\newblock {\em Ars Combinatoria}, 18:195--200, 1984.

\bibitem{GerPat18}
D{\'a}niel Gerbner and Bal{\'a}zs Patk{\'o}s.
\newblock {\em Extremal finite set theory}.
\newblock CRC Press, 2018.

\bibitem{GerGriHam12}
Benjamin Girard, Simon Griffiths, and Yahya~Ould Hamidoune.
\newblock k-sums in abelian groups.
\newblock {\em Combinatorics, Probability and Computing}, 21(4):582--596, 2012.

\bibitem{KreNisRon99}
Ilan Kremer, Noam Nisan, and Dana Ron.
\newblock On randomized one-round communication complexity.
\newblock {\em Computational Complexity}, 8:21--49, 1999.

\bibitem{Mat13}
Ji\v{r}i Matousek.
\newblock {\em Lectures on discrete geometry}, volume 212.
\newblock Springer Science \& Business Media, 2013.

\bibitem{MesRon13}
Tam{\'a}s M{\'e}sz{\'a}ros and Lajos R{\'o}nyai.
\newblock Shattering-extremal set systems of small {VC}-dimension.
\newblock {\em ISRN Combinatorics}, 2013.

\bibitem{Paj85}
Alain Pajor.
\newblock Sous-espaces $\ell^n_1$ des espaces de {B}anach.
\newblock {\em Hermann, Paris, Collection Travaux en cours}, 1985.

\bibitem{sau72}
Norbert Sauer.
\newblock On the density of families of sets.
\newblock {\em Journal of Combinatorial Theory, Series A}, 13:145--147, 1972.

\bibitem{She72}
Saharon Shelah.
\newblock A combinatorial problem; stability and order for models and theories
  in infinitary languages.
\newblock {\em Pacific Journal of Mathematics}, 41:247--261, 1972.

\bibitem{VapChe68}
Vladimir~Naumovich Vapnik and Aleksei~Yakovlevich Chervonenkis.
\newblock The uniform convergence of frequencies of the appearance of events to
  their probabilities.
\newblock In {\em Doklady Akademii Nauk}, volume 181, pages 781--783. Russian
  Academy of Sciences, 1968.

\end{thebibliography}

\end{document}